\newtheorem{theorem}{Theorem}[section]
\newenvironment{proof}{{\noindent\it Proof:}}{\hfill $\square$\par}
\title{The high-order exponential semi-implicit scalar auxiliary variable approach for nonlocal Cahn-Hilliard equation}
\author[a]{Xiaoqing Meng}
\author[a]{Aijie Cheng \thanks{Corresponding author: aijie@sdu.edu.cn}}
\author[b]{Zhengguang Liu}
\affil[a]{School of Mathematics, Shandong University, Jinan, Shandong 250100, China.}
\affil[b]{School of Mathematics and Statistics, Shandong Normal University, Jinan, Shandong 250014, China.}
\date{}
\begin{document}
\maketitle 

\begin{abstract}
The nonlocal Cahn-Hilliard (NCH) equation with nonlocal diffusion operator
is more suitable for the simulation of microstructure phase transition 
than the local Cahn-Hilliard (LCH) equation. 
In this paper, based on the
exponential semi-implicit scalar auxiliary variable (ESI-SAV) 
method, the highly effcient and accurate schemes in time with 
unconditional energy stability for solving 
the NCH equation are proposed. 
On the one hand, we have demostrated the unconditional energy stability for the NCH equation
with its high-order semi-discrete schemes carefully and rigorously. 
On the other hand, in order to reduce the calculation and storage cost in 
numerical simulation, we use the fast solver based on FFT and FCG for 
spatial discretization. 
Some numerical simulations involving the Gaussian 
kernel are presented and show the stability, accuracy, efficiency 
and unconditional energy stability of the proposed schemes.
\\
\noindent{\textbf{keywords}}
\\
High-order schemes; Exponential semi-implicit scalar auxiliary variable;
Nonlocal Cahn-Hilliard equation; Unconditional energy stability;
Numerical simulations
\end{abstract}

\section{Introduction}
It is well known that LCH equation is one of the phase field models, 
and the history of the phase field model 
can be traced back to a century 
ago which has been applied in many fields
\cite{9,10,11,12,13,14,15,17,18}. 
The Cahn-Hilliard-type models are effective numerical tools 
for simulating interface motions between various materials 
\cite{1,2,3,5,6,7,40,46,52}.
The LCH is the result of the variation of the energy functional
in Sobolev space $H^{-1}$. 
Furthermore, the LCH equation can be regarded as an approximation of 
the NCH model in which the nonlocal convolution
potential is replaced by the differential term \cite{19,20}.
For the nonlocal models, much has been done in mathematical analysis. 
Bates and Han \cite{25,26} analyzed the well-posedness of equations 
with Neumann and Dirichlet boundary conditions. 
Guan et al. pointed out in \cite{29} that the existence and uniqueness of 
periodic solutions of equations can be proved by a similar technique. 
In order to develop a general framework for nonlocal equations, Du 
et al. \cite{28} analyzed a class of nonlocal spread problems with volumetric 
constraint boundary conditions.\par
As the NCH equation gets more and more attention
and is applied in many fields from physics, material science to finance and 
image processing \cite{21,2,23,24,31},
so it is necessary to construct some effective methods for solving the NCH equation.
Due to the functional variation approach used in the modeling process, 
the exact solution of the phase field follows the energy dissipation law, 
which demonstrates the thermodynamic consistency in physics and 
well-posedness in mathematics. 
Therefore, the main challenge of numerical simulation for the NCH equation is 
to design appropriate method to discrete nonlinear and nonlocal terms while 
maintaining the energy stability at the discrete level. 
In addition, if the numerical energy stability has no restriction with respect to
the time step, it is usually called unconditional energy stability \cite{40}.
The significance of energy stability is not only important 
for long time accurate numerical simulation of phase field models, 
but also provides flexibility for dealing with stiffness problems.
This property provides a lot of theoretical and practical support 
for efficient numerical analysis and reliable computer simulation, 
and is widely used in various numerical schemes of classical phase field models,
such as convex splitting schemes \cite{29,30}, 
stabilized schemes \cite{50,51},
the invariant energy quadratization (IEQ) \cite{31}, 
the scalar auxiliary variable (SAV) methods \cite{32}, 
the various variants of SAV \cite{33,41,42} and so on. 
It is also worth studying whether these various effective numerical approaches
can be applied to nonlocal phase field model due to the lack of 
high-order diffusion term \cite{30,34}. \par
Moreover, there is no doubt that, under certain precision requirements, 
if the expected time step is as large as possible, 
the high-order scheme in time is better than the lower-order scheme.
This fact prompted us to develop high-order schemes, 
there are some existing woks, such as the high-order SAV-RK (Runge-Kutta) \cite{61,62},
SAV-GL (general linear time discretization) \cite{63},
implicit-explicit BDF$k$ SAV \cite{64}. 
All of these methods can be used to construct 
high-order schemes for numerical simulation of phase field models.
\par
The purpose of this paper is to establish the high-order linear schemes (in time) for 
the NCH equation and prove the unconditional energy stability of 
the semi-discrete level, which can be naturally extended to 
the fully discrete setting.
We adopt the exponential semi-implicit scalar auxiliary variable (ESI-SAV) 
approach \cite{33}, which is a novel method and has been successfully applied to solve some 
gradient flow and non-gradient but dissipative system.
The basic idea of the ESI-SAV method is to convert the free energy into a 
logarithmic form of a new variable by introducing a scalar auxiliary 
variable (since the exponential form of the original energy is usually 
bounded and positive). 
One of the advantages of this method is that all nonlinear terms 
can be treated explicitly accordingly, 
while linear terms are treated implicitly, 
resulting in decoupled linear systems.
Moreover, whether in continuous or discrete level, 
the newly reconstructed system still maintains the same energy dissipation law as the original system.
On this basis, we establish and analyze the linear 
unconditionally energy stable schemes from the 
first- to the fourth-order (in time) based on ESI-SAV approach.
As shown in detail, they are high-order accurate (third- and fourth-order in time),
easy-to-implement (explicit linear system), 
and unconditionally energy stable (with a discrete energy dissipation law). 
In addition, in terms of spatial discretization, we use the second-order 
central difference formula. 
Considering the huge computational cost and memory requirement of 
solving linear systems generated by the nonlocal terms,
we choose an efficient computational method \cite{32} based on fast Fourier 
transform (FFT) and fast conjugate gradient (FCG) approaches to 
reduce the computational cost.\par
The structure of this paper ia as follows.
In section 2, 
we describe the NCH model with general nonlinear potential and 
the relationship with the LCH model.
In section 3, based on the ESI-SAV approach, we construct the
high-order unconditional energy stable schemes for the NCH model. 
In section 4, we use a variety of the classical numerical examples 
in 2D to verify the accuracy and efficiency of 
our proposed schemes.\par
We use $C$ throughout the paper, with or without subscript, to denote a positive 
constant, which could have different values at different appearances.

\section{The NCH equation}\par
In this section, we will briefly review the NCH equation 
and the connection with the LCH equation.
\subsection{Model formation}
The mesoscopic description of phase transition is usually modeled under the 
assumption that the evolution of order parameter follows the gradient flow of 
free energy relative to a certain measure. 
Now let us consider the bounded spatial domain $\varOmega$,  
a rectangular cell in $\mathbb{R}^d \left( d=1,2,3 \right)$.
The $L^2$ inner product and $L^2$-norm can be defined as
$$
\left( \phi ,\varphi \right) =\int_{\varOmega}{\phi \left( \mathbf{x} \right) \varphi \left( \mathbf{x} \right)d\mathbf{x}},\quad
\lVert \phi \rVert _2=\left( \phi ,\phi \right) ^{\frac{1}{2}},\quad \forall \phi ,\varphi \in L^2\left(\varOmega\right).
$$
The general form of the total 
free energy functional for the NCH equation can be written 
as \cite{34}: 
\begin{equation}
    \begin{aligned}
        \label{eqE1}
        E\left( \phi \right) =\int_{\varOmega}{\left( \frac{\varepsilon ^2}{4}\int_{\varOmega}{J\left( \mathbf{x}-\mathbf{y} \right) \left( \phi \left( \mathbf{x} \right) -\phi \left( \mathbf{y} \right) \right) ^2d\mathbf{y}+F\left( \phi \left( \boldsymbol{x} \right) \right)} \right)}d\mathbf{x},\\
    \end{aligned}
\end{equation}
where $\phi$ is an order parameter, $\varepsilon$ is an 
interface parameter, which represents the interface thickness and satisfies 
$0<\varepsilon \le 1$, $F\left( \phi \right)$ is a nonlinear functional 
and the most commonly used Ginzburg-Landau 
double-well potential has the following form:
\begin{equation}
    F\left( \phi \right) =\frac{\left( \phi ^2-1 \right) ^2}{4}.
    \label{eqF}
\end{equation}
\par
Now we introduce the nonlocal diffusion operator $\mathcal{L}$ 
\cite{29,30,32,34}, which is a linear, 
self-adjoint, positive semi-definite operator:
\begin{equation}
    \label{eqL}
    \mathcal{L} \phi 
    =\int_{\varOmega}{J( \mathbf{x}-\mathbf{y} ) 
    \left( \phi ( \mathbf{x} ) -\phi ( \mathbf{y} 
    ) \right)d\mathbf{y}},\quad \mathbf{x}\in \varOmega.
\end{equation}
\par
The interaction kernel $J$ is integrable and satisfies the following conditions \cite{34}:
\begin{align*} 
& \left( a \right) \,J\left( \mathbf{x} \right) \ge 0 \ \text{for any}\ \mathbf{x} \in \varOmega;\\
& \left( b \right) \,J\left( \mathbf{x} \right) =J\left( -\mathbf{x} \right) ;\\
& \left( c \right) \,J\ \text{is}\ \varOmega-\text{periodic};\\
& \left( d \right) \,J\ \text{is a radial function}.
\end{align*}
\par
Using the conditions of the kernel $J$, we get \cite{28}:
\begin{align*} 
    \left( \mathcal{L} \phi ,
    \phi \right) =\frac{1}{2}\int_{\varOmega}{\int_
    {\varOmega}{J\left( \mathbf{x}-\mathbf{y} \right) \left( \phi \left( \mathbf{x} 
    \right) -\phi \left( \mathbf{y} \right) \right) ^2 d \mathbf{y} d \mathbf{x}}}\ge 0.   
\end{align*}
\par
So the nonlocal total free energy \eqref{eqE1} can be written as:
\begin{align}
    \label{eqE2}
    E\left( \phi \right) =\int_{\varOmega}{\left(\frac{\varepsilon ^2}{2}\phi \mathcal{L}\phi +F\left( \phi \right)\right)}d\mathbf{x}.
\end{align}

\subsection{Connection with the LCH equation}
We now give a brief introduction to the well-known LCH model \cite{1}.
Supposing $\phi \left( \mathbf{x},t \right) :\varOmega \times \left[ 0,T \right]\rightarrow \left[ -1,1 \right]$ 
is an order parameter, the total free energy takes the form \cite{29}:
\begin{equation}
    \label{eqEL}
    E_{local}\left(\phi\right)=\int_{\varOmega}{\left(\frac{1}{2}\varepsilon^2\left|
    \nabla\phi\right|^2+F\left(\phi\right)\right)d\mathbf{x}},
\end{equation}
where $T>0$ is the terminal time,
$\phi =\pm 1$ corresponds to the steady state of a phase transition.\par
We can observe that the difference between the LCH equation and the NCH 
equation is in their free energy. 
The LCH equation is considered to be derived from 
the energy functional \eqref{eqEL} of the conserved gradient flow:
\begin{equation}  
    \left\{\begin{array}{l} 
    \begin{aligned}
        \notag
        \frac{\partial \phi}{\partial t}={}&M\Delta \mu,\\
        \mu ={}&-\varepsilon ^2\Delta \phi +f\left( \phi \right),
    \end{aligned}
    \end{array}\right. 
\end{equation}    
where $M$ is the mobility constant, 
$f\left( \phi \right) =F'\left( \phi \right)$ is a nonlinear term, 
and $\mu$ is the chemical potential. 
The unknown $\phi\left(\mathbf{x,}t\right)$ is subject to the initial condition 
$\phi \left( \mathbf{x},0 \right) =\phi _0\left( \mathbf{x} \right) ,\mathbf{x}\in \varOmega $, 
with the following boundary condition: periodic, or no flux boundary condition which is given as :
$\frac{\partial \phi}{\partial \vec{\mathbf{n}}}=\frac{\partial \mu}{\partial \vec{\mathbf{n}}}=0$ on $\partial \varOmega $,
where $\vec{\mathbf{n}}$ is the unit outward normal vector on $\partial \varOmega$.
\par
Similarly, using the variational principle for the nonlocal
energy functional \eqref{eqE2}, we can get
\begin{equation}  
        \left\{\begin{array}{l} 
        \begin{aligned}
            \label{NN}
            \frac{\partial \phi}{\partial t} ={}& M\Delta \mu,\\
            \mu ={}& \varepsilon ^2\mathcal{L}\phi +f\left( \phi \right).
        \end{aligned}
        \end{array}\right. 
    \end{equation}
    \par 
Next, we briefly introduce how the NCH model is derived from the functional variation in the 
free energy functional \eqref{eqE2}. 
Denoting its variational derivative as $\mu =\frac{\delta E}{\delta \phi}$,
the general form of the gradient flow model can be written as \cite{35}
\begin{equation}
    \frac{\partial \phi}{\partial t}=M\mathcal{G}\mu,\quad \left( \mathbf{x},t \right) \in \varOmega \times \left[ 0,T \right]\rightarrow \left[ -1,1 \right],
    \label{eq2.5}
\end{equation}
where the $\mathcal{G}$ is a linear, negative semi-definite operator.
\par
The specific form of the chemical potential $\mu$ can 
be derived by solving the variational derivative of the functional \eqref{eqE2},
and the process is as follows:\\
for any function $\eta \left( \mathbf{x} \right)$, which is sufficiently smooth and satisfies
$\eta |_{\partial \varOmega}=0$ on $\partial \varOmega $, we have
\begin{align*}
    \int_{\varOmega}{\frac{\delta E\left( \phi \right)}{\delta \phi}\eta}d\mathbf{x}
    ={}& \lim_{\theta \rightarrow 0}\frac{E\left( \phi +\theta \eta \right) -E\left( \phi \right)}{\theta}\\    
    ={}& \lim_{\theta \rightarrow 0}\frac{\varepsilon ^2}{2\theta}\int_{\varOmega}{\left[ \left( \mathcal{L}\left( \phi +\theta \eta \right) ,\phi +\theta \eta \right) -\left( \mathcal{L}\phi ,\phi \right) \right]}d\mathbf{x}\\
    +{}& \lim_{\theta \rightarrow 0}\frac{1}{\theta}\int_{\varOmega}{\left[ F\left( \phi +\theta \eta \right) -F\left( \phi \right) \right]}d\mathbf{x}\\
    ={}& \varepsilon ^2\int_{\varOmega}{\left( \mathcal{L}\phi \right) \eta}d\mathbf{x}+\int_{\varOmega}{F'\left( \phi \right) \eta}d\mathbf{x}\\
    ={}& \int_{\varOmega}{\left( \varepsilon ^2\mathcal{L}\phi +f\left( \phi \right) \right) \eta}d\mathbf{x},
\end{align*}
thus,
\begin{equation}
    \mu = \frac{\delta E\left( \phi \right)}{\delta \phi}=\varepsilon ^2 \mathcal{L} \phi +f\left( \phi \right).
    \label{eq2.6}
\end{equation}
\par
Then, by combining equations \eqref{eq2.5}, \eqref{eq2.6}, 
we obtain the NCH model: 
\begin{equation}  
    \label{eqGL}
    \left\{\begin{array}{l} 
    \begin{aligned}
        &\frac{\partial\phi}{\partial t}=M\mathcal{G}\mu,
        &\left(\mathbf{x},t\right) \in \varOmega _T,\\
        &\mu = \varepsilon ^2\mathcal{L}\phi +f\left( \phi \right),
        &\left( \mathbf{x},t \right) \in \varOmega _T,\\
    \end{aligned}
    \end{array}\right. 
\end{equation}
corresponding to initial condition and periodic or no flux boundary conditions.
In system \eqref{NN}, we take $\mathcal{G}$ as $\Delta $.
If we define $\mathcal{G}$ as the nonlocal diffusion operator $-\mathcal{L}$,  
we can get a general NCH equation with general nonlinear potential \cite{32}:
\begin{equation}  
    \left\{\begin{array}{l} 
        \begin{aligned}
            \label{NCH}
            \frac{\partial \phi}{\partial t}={}&-M\mathcal{L}\mu,\\
            \mu ={}& \varepsilon ^2\mathcal{L}\phi +f\left( \phi \right).
        \end{aligned}
        \end{array}\right. 
    \end{equation}
\par
Furthermore, in order to explain the relationship between 
the local and nonlocal energies,
we can use Taylor expansion \cite{37,38} to approximate the interaction energy density 
with the periodicity of $\phi$ and $J$, and the conditions $\left(a\right)-\left( d \right)$ of $J$,
then we can get 
\begin{align*}
    &\frac{\varepsilon^2}{4}\int_{\varOmega}{J\left(\mathbf{x}-\mathbf{y}\right) 
    \left(\phi\left(\mathbf{x}\right)-\phi\left(\mathbf{y}\right)\right)^2d\mathbf{y}}\\
    =&\frac{\varepsilon^2}{4}\int_{\varOmega}{J\left(\mathbf{y}\right) 
    \left(\phi\left(\mathbf{x}\right)-\phi\left(\mathbf{x}+\mathbf{y}\right) 
    \right)^2d\mathbf{y}}\\
    \approx&\frac{\varepsilon^2}{4}\int_{\varOmega}{J\left(\mathbf{y}\right)\left| 
    \mathbf{y}\right|^2\left|\nabla\phi\left(\mathbf{x}\right)\right|^2d\mathbf{y}}\\
    =&\frac{\varepsilon ^2}{2}\left|\nabla\phi\left(\mathbf{x}\right)\right|^2.    
\end{align*}
\par
The operator $-\mathcal{L}$ of the NCH model \eqref{NCH} is negative semi-definite, 
so the free energy equation \eqref{eqE2} is decreasing with time,
\begin{equation}
    \begin{aligned}
        \label{eqEss}
        \frac{dE\left( \phi \left( t \right) \right)}{dt}
        ={}& \frac{d}{dt}\left( \frac{\varepsilon ^2}{2}\left( \mathcal{L}\phi ,\phi \right) +\int_{\varOmega}{F\left( \phi \right)}d\mathbf{x} \right)\\
        ={}& \left( \phi _t,\varepsilon ^2\mathcal{L}\phi \right) +\left( \phi _t,f( \phi ) \right) \\
        ={}& \left( \phi _t,\varepsilon ^2\mathcal{L}\phi +f\left( \phi \right) \right) \\
        %={}& \left( \mu ,M\mathcal{G}\mu \right) \\
        ={}& \left(-M\mathcal{L}\mu,\mu \right) \le 0.
    \end{aligned} 
\end{equation}

\section{The ESI-SAV scheme for the NCH equation}\par
In this section, we will construct and analyze the linear high-order semi-discrete 
time marching numerical schemes for the NCH model 
with general nonlinear potential. 
It will be clear that the unconditional energy stability of 
the semi-discrete scheme is also valid in the fully discrete formulation.
Let  $N_t>0$ be a positive integer, and
$$ T=N_t\varDelta t, \quad t^n=n\varDelta t,\quad  \forall n\le N_t. $$
For the ESI-SAV approach, similar to \cite{33}, 
we introduce an exponential scalar auxiliary variable :
\begin{equation*}
    R\left( t \right) =\exp \left( E\left( \phi \right) \right) =\exp \left( \frac{\varepsilon ^2}{2}\left( \mathcal{L}\phi ,\phi \right) +\int_{\varOmega}{F\left( \phi \right) d\mathbf{x}} \right) >0, \quad \forall t\in \left[ 0,T \right] .
\end{equation*}
\par
Then the equivalent new format of the NCH model \eqref{NCH} is:
\begin{equation} 
    \label{NCH0}   
    \left\{\begin{array}{l}
        \begin{aligned}            
            \frac{\partial \phi}{\partial t}={}&-M\mathcal{L}\mu,\\
            \mu ={}&\varepsilon ^2\mathcal{L}\phi +\frac{R}{\exp 
            \left(E\left( \phi \right) \right)}f\left( \phi \right),\\
            \frac{dR}{dt}={}&R\left( -M\mathcal{L}\mu ,\mu \right).\\       
        \end{aligned}  
    \end{array} \right.
\end{equation} 
\par
For the above system, the new energy is defined as $\ln{R}$, 
and it can be observed that the new energy is equivalent 
to the original energy, that is $\ln{R}=E$,
and satisfies the energy dissipation law.
Rewrite the third equation of the above system \eqref{NCH0}, we get
\begin{align*}
   \frac{dE}{dt}= \frac{d\ln R}{dt}=\frac{1}{R}\frac{dR}{dt}=\left( -M\mathcal{L}\mu ,\mu \right) \le 0.
\end{align*}
This result is consistent with the energy dissipation law of the original energy \eqref{eqEss}.

\subsection{First-order ESI-SAV scheme}
The frist-order ESI-SAV scheme for \eqref{NCH0} reads as:
\begin{equation} 
    \label{NCH1}   
    \left\{ \begin{array}{l}
        \begin{aligned}            
	        \frac{\phi ^{n+1}-\phi ^n}{\varDelta t}={}&-M\mathcal{L}\mu ^{n+1},\\
	        \mu ^{n+1}={}&\varepsilon ^2\mathcal{L}\phi ^{n+1}+\frac{R^{n+1}}{\exp \left( E\left( \phi ^n \right) \right)}f\left( \phi ^n \right),\\
	        \frac{R^{n+1}-R^n}{\varDelta t}={}&R^{n+1}\left( -M\mathcal{L}\bar{\mu}^n,\bar{\mu}^n \right),\\
        \end{aligned}  
    \end{array} \right.
\end{equation}  
with the initial conditions
\begin{align*}
    &\phi ^0=\phi _0\left( \mathbf{x}  \right),\quad \forall \mathbf{x}\in \varOmega,\\
    &R^0=\exp \left( E\left( \phi ^0 \right) \right).
\end{align*}
\begin{theorem}
    \label{the-1}
    The scheme \eqref{NCH1} for the NCH equation is 
    unconditionally energy stable in the sense that
    \begin{equation*} 
        R^{n+1}-R^n=-R^{n+1}\varDelta t\left( M\mathcal{L}\bar{\mu}^n,\bar{\mu}^n \right) \le 0,
    \end{equation*}
    \\
    and more importantly we have
    \begin{equation*} 
        \ln R^{n+1}-\ln R^n\le 0.
    \end{equation*}
\end{theorem}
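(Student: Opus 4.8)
The plan is to extract both claims almost directly from the third equation of the scheme \eqref{NCH1}, with the only genuine work being to certify that $R^{n+1}$ remains strictly positive, since otherwise the asserted signs would be meaningless. First I would collect the $R^{n+1}$ terms in the third equation of \eqref{NCH1}, rewriting it as the relation
\begin{equation*}
    R^{n+1}\left(1 + \varDelta t\left(M\mathcal{L}\bar{\mu}^n,\bar{\mu}^n\right)\right) = R^n
\end{equation*}
satisfied by the solution, so that $R^{n+1} = R^n/\left(1 + \varDelta t\left(M\mathcal{L}\bar{\mu}^n,\bar{\mu}^n\right)\right)$. The first displayed identity in the theorem is then nothing more than the original third equation rearranged, so no computation is required for it; everything reduces to controlling the sign of the right-hand side.

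To pin down that sign I would invoke the structural facts recorded in Section 2: $\mathcal{L}$ is self-adjoint and positive semi-definite, so that $\left(\mathcal{L}\bar{\mu}^n,\bar{\mu}^n\right)\ge 0$ for any admissible function $\bar{\mu}^n$, and with $M>0$ the scalar $\left(M\mathcal{L}\bar{\mu}^n,\bar{\mu}^n\right)\ge 0$ as well. I stress that this holds regardless of the precise definition of $\bar{\mu}^n$ (whether it is computed explicitly or enters implicitly), because positive semi-definiteness applies to $\mathcal{L}$ as an operator. Consequently the factor $1+\varDelta t\left(M\mathcal{L}\bar{\mu}^n,\bar{\mu}^n\right)\ge 1$ is strictly positive for every $\varDelta t>0$ — this is exactly the point at which the unconditional, step-size independent character of the estimate enters.

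Next I would establish positivity of the auxiliary variable by induction on $n$. The base case is immediate, since $R^0=\exp\left(E\left(\phi^0\right)\right)>0$ by definition. For the inductive step, the representation above expresses $R^{n+1}$ as a positive $R^n$ divided by a factor no smaller than one, so $R^{n+1}>0$. With $R^{n+1}>0$ and $\left(M\mathcal{L}\bar{\mu}^n,\bar{\mu}^n\right)\ge 0$ now both available, the factorization $R^{n+1}-R^n=-R^{n+1}\varDelta t\left(M\mathcal{L}\bar{\mu}^n,\bar{\mu}^n\right)$ is a product of a positive number and a nonpositive number, which yields $R^{n+1}-R^n\le 0$. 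The care I would take here is to secure positivity \emph{before} asserting this sign rather than after, since the inequality is vacuous without $R^{n+1}\ge 0$.

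Finally, for the logarithmic form — which is the physically meaningful statement, since $\ln R = E$ is the reconstructed energy and this is where contact is made with the continuous dissipation law \eqref{eqEss} — I would simply take logarithms of the representation to obtain
\begin{equation*}
    \ln R^{n+1}-\ln R^n = -\ln\left(1+\varDelta t\left(M\mathcal{L}\bar{\mu}^n,\bar{\mu}^n\right)\right)\le 0,
\end{equation*}
where the sign follows from $\ln(1+x)\ge 0$ for $x\ge 0$. I do not expect a real obstacle in this argument: the whole proof rests on the positive semi-definiteness of $\mathcal{L}$ together with the positivity of $R^{n+1}$, and the latter is guaranteed precisely because the implicit treatment of $R$ in the third equation of \eqref{NCH1} forces the denominator to exceed one. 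If anything is delicate, it is only the bookkeeping of the induction and the observation that the bound is independent of $\varDelta t$.
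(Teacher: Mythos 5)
Your proposal is correct and takes essentially the same approach as the paper: both rearrange the third equation of \eqref{NCH1} into $R^{n+1}=R^n/\left(1+\varDelta t\left(M\mathcal{L}\bar{\mu}^n,\bar{\mu}^n\right)\right)$, use the positive semi-definiteness of $\mathcal{L}$ (with $M>0$) to conclude $R^{n+1}>0$ and hence $R^{n+1}-R^n\le 0$, and then pass to the logarithmic inequality. Your explicit induction for positivity and the identity $\ln R^{n+1}-\ln R^n=-\ln\left(1+\varDelta t\left(M\mathcal{L}\bar{\mu}^n,\bar{\mu}^n\right)\right)$ merely spell out details that the paper handles by invoking the strict monotonicity of the logarithm.
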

\begin{proof}
    By the definition of $R\left( t \right)$ and the negative semi-definite operator
    $-\mathcal{L}$, we obtain 
    $R^{n+1}=\dfrac{R^n}{1+\varDelta t\left( M\mathcal{L}\bar{\mu}^n,\bar{\mu}^n \right)}>0$.
    Then combining the inequality $R^{n+1}>0$ with the third equation of \eqref{NCH1}, 
    we can easily obtain the following modified energy stability:
    \begin{equation*} 
        R^{n+1}-R^n=R^{n+1}\varDelta tM\left( \mathcal{G}\bar{\mu}^n,\bar{\mu}^n \right) \le 0.
    \end{equation*}
    \par
    Noting that $E\left( \phi \right) =\ln \left( \exp \left( E\left( \phi \right) \right) \right) =\ln R$ 
    and the logarithm function is strictly monotonically increasing, we can also obtain the following energy stability:
    \begin{equation*} 
        \ln R^{n+1}-\ln R^n\le 0.
    \end{equation*}
\end{proof}
\par
Next, in order to show the computational simplicity and effectiveness of the proposed 
numerical scheme, we give the explicit process for solving 
the first-order scheme \eqref{NCH1} as below, 
and other high-order schemes are solved similarly.
By rewriting the scheme \eqref{NCH1}, we can get 
\begin{subequations}
    \label{eqNCH1}
    \begin{align}
        \left( \frac{1}{\varDelta t}+M\varepsilon ^2\mathcal{L}^2 \right) \phi ^{n+1}&=\frac{1}{\varDelta t}\phi ^n-M\mathcal{L}\frac{R^{n+1}}{\exp \left( E\left( \phi ^n \right) \right)}f\left( \phi ^n \right) \label{eq1_1},\\
        R^{n+1}&=\frac{R^n}{1+\varDelta t\left( M\mathcal{L}\bar{\mu}^n,\bar{\mu}^n \right)}. \label{eq1_2}       
        \end{align}  
    \end{subequations}
\par
Obviously, \eqref{eqNCH1} is uniquely solvable for any $\varDelta t >0$ since 
$ \frac{1}{\varDelta t}+M\varepsilon ^2\mathcal{L}^2 $ is %self-adjoint and 
positive definite.
To summarize, we implement \eqref{NCH1} as follows:\\
with the value of $\phi^{n}$ known,
\begin{enumerate}[itemindent=1em]
    \item[($i$)] Compute $\bar{\mu}^n$ by $\bar{\mu}^n=\varepsilon ^2\mathcal{L}\phi ^n+f\left( \phi ^n \right)$,
    \item[($ii$)] Compute $R^{n+1}$ from equation \eqref{eq1_2},
    \item[($iii$)] Compute $\phi^{n+1} $ from equation \eqref{eq1_1}.
\end{enumerate}
\par
Note that in the above steps $(ii),(iii)$, we only need to solve explicitly 
two linear equations with constant coefficients of the form 
\begin{equation*}
    \bar{A}\bar{\phi}=\bar{b}.
\end{equation*}

\subsection{Second-order Crank-Nicolson scheme }
In this section, we will present and analyze linear, 
second-order (in time) numerical 
ESI-SAV scheme based on the Crank-Nicolson formula. 
We firstly introduce a new variable 
$\xi =\frac{R}{\exp \left( E\left( \phi \right) \right)}$.  
It is obviously that $\xi \equiv$ 1 at the continuous level, 
and $\xi \left( 2-\xi \right)$ is also 
equal to 1. So the system \eqref{NCH} can be rewritten as:
\begin{equation} 
    \label{NCH00}   
    \left\{ \begin{array}{l}
        \begin{aligned}            
	        \frac{\partial \phi}{\partial t}={}&-M\mathcal{L}\mu,\\
	        \mu ={}&\varepsilon ^2\mathcal{L}\phi +\xi \left( 2-\xi \right) f\left( \phi \right),\\
	        \xi ={}&\frac{R}{\exp \left( E\left( \phi \right) \right)},\\
            \frac{dR}{dt}={}&R\left( -M\mathcal{L}\mu ,\mu \right).
        \end{aligned}  
    \end{array} \right.
\end{equation} 
\par
The new equivalent system also maintains the original energy dissipation law:
\begin{equation*}
    \frac{dE}{dt}=\frac{d \left( \ln R \right)}{dt}=\frac{1}{R}\frac{dR}{dt} =\left( -M\mathcal{L}\mu ,\mu \right) \le 0.
\end{equation*}
\par
A second-order ESI-SAV scheme for the system \eqref{NCH00} reads as:  
\begin{equation} 
    \label{NCHCN}
    \left\{ \begin{array}{l}
        \begin{aligned}
	    &\frac{\phi ^{n+1}-\phi ^n}{\varDelta t}=-M\mathcal{L}\mu ^{n+\frac{1}{2}},\\
	    &\mu ^{n+\frac{1}{2}}=\varepsilon ^2\mathcal{L}\frac{\phi ^{n+1}+\phi ^n}{2}+V\left( \xi ^{n+1} \right) f\left( \phi ^{*,n+\frac{1}{2}} \right),\\
	    &\xi ^{n+1}=\frac{R^{n+1}}{\exp \left( E\left( \phi ^{*,n+\frac{1}{2}} \right) \right)},\\
	    &V\left( \xi ^{n+1} \right) =\xi ^{n+1}\left( 2-\xi ^{n+1} \right),\\
	    &\frac{R^{n+1}-R^n}{\varDelta t}=R^{n+1}\left( -M\mathcal{L}\bar{\mu}^{n+\frac{1}{2}},\bar{\mu}^{n+\frac{1}{2}} \right),\\
        \end{aligned}        
    \end{array} \right. 
\end{equation}
with the initial conditions
\begin{align*}
    &\phi ^0=\phi _0\left( \mathbf{x}\right),\quad \forall \mathbf{x}\in \varOmega,\\
    &R^0=\exp \left( E\left( \phi ^0 \right) \right),       
\end{align*}
where $\bar{\mu}^{n+\frac{1}{2}}=\varepsilon ^2\mathcal{L}\phi ^{*,n+\frac{1}{2}}+f\left( \phi ^{*,n+\frac{1}{2}} \right) ,$
$\phi ^{*,n+\frac{1}{2}}$ is any explicit $O\left( \varDelta t^2 \right)$ 
approximation of $\phi \left( t^{n+\frac{1}{2}} \right)$. We can choose the
extrapolation formula
\begin{align*}
    \phi ^{*,n+\frac{1}{2}}=\frac{3}{2}\phi ^n-\frac{1}{2}\phi ^{n-1},\quad n\ge 1,
\end{align*}
\par
and for $\phi ^{*,\frac{1}{2}}$, we can obtain it as follows:
\begin{equation*}
    \frac{\phi ^{*,\frac{1}{2}}-\phi ^0}{\varDelta t/2}=-M\mathcal{L}\left( \varepsilon ^2\mathcal{L}\phi ^{*,\frac{1}{2}}+f\left( \phi ^0 \right) \right). 
\end{equation*}
\par
Similar to the proof of Theorem \ref{the-1}, we can easily get the following theorem:
\begin{theorem}
    The scheme \eqref{NCHCN} for the NCH equation is unconditionally energy stable in the sense that
    \begin{equation}
        R^{n+1}-R^n=R^{n+1}\varDelta t\left( -M\mathcal{L}\bar{\mu}^{n+\frac{1}{2}},\bar{\mu}^{n+\frac{1}{2}} \right) \le 0,
    \end{equation}
    and more importantly we have 
    \begin{equation}
        \ln R^{n+1}-\ln R^n\le 0.
    \end{equation}
\end{theorem}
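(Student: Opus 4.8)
The plan is to mirror the proof of Theorem \ref{the-1} almost verbatim, since the energy dissipation of the Crank--Nicolson scheme \eqref{NCHCN} hinges only on the structure of the auxiliary-variable equation together with the positive semi-definiteness of $\mathcal{L}$, and is entirely insensitive to the second-order treatment of the nonlinear term through $V(\xi^{n+1})$ and the extrapolation $\phi^{*,n+\frac{1}{2}}$. First I would isolate $R^{n+1}$ from the last equation of \eqref{NCHCN}. Collecting the $R^{n+1}$ terms in
$$\frac{R^{n+1}-R^n}{\varDelta t}=R^{n+1}\left( -M\mathcal{L}\bar{\mu}^{n+\frac{1}{2}},\bar{\mu}^{n+\frac{1}{2}} \right)$$
yields the closed-form expression
$$R^{n+1}=\frac{R^n}{1+\varDelta t\left( M\mathcal{L}\bar{\mu}^{n+\frac{1}{2}},\bar{\mu}^{n+\frac{1}{2}} \right)}.$$

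Next I would establish positivity of $R^{n+1}$. Because $\mathcal{L}$ is positive semi-definite and the mobility $M>0$, the quadratic form satisfies $\left( M\mathcal{L}\bar{\mu}^{n+\frac{1}{2}},\bar{\mu}^{n+\frac{1}{2}} \right)\ge 0$, so the denominator is at least $1>0$. Combined with $R^0=\exp\!\left(E(\phi^0)\right)>0$ and a trivial induction on $n$, this gives $R^{n+1}>0$ for every $n$. With positivity in hand, I would substitute back into the auxiliary equation: since $\left( -M\mathcal{L}\bar{\mu}^{n+\frac{1}{2}},\bar{\mu}^{n+\frac{1}{2}} \right)\le 0$, multiplying by the positive factor $R^{n+1}\varDelta t$ preserves the sign and produces the first claimed inequality $R^{n+1}-R^n\le 0$. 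Finally, since $\ln R=E$ holds by the very definition of the exponential auxiliary variable and $\ln$ is strictly increasing, the bound $R^{n+1}\le R^n$ transfers directly to $\ln R^{n+1}-\ln R^n\le 0$, which is the modified energy dissipation law.

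I would emphasize that there is no genuine obstacle here. The only place where the higher order of the scheme could conceivably intrude is the definition of $\bar{\mu}^{n+\frac{1}{2}}=\varepsilon^2\mathcal{L}\phi^{*,n+\frac{1}{2}}+f\!\left(\phi^{*,n+\frac{1}{2}}\right)$; but once $\phi^{*,n+\frac{1}{2}}$ has been computed, $\bar{\mu}^{n+\frac{1}{2}}$ is a fixed element of $L^2(\varOmega)$, so $\left(\mathcal{L}\bar{\mu}^{n+\frac{1}{2}},\bar{\mu}^{n+\frac{1}{2}}\right)$ is simply a nonnegative scalar and the argument is indifferent to the order of accuracy. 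The \emph{unconditional} character of the stability is exactly the observation that the denominator $1+\varDelta t\left( M\mathcal{L}\bar{\mu}^{n+\frac{1}{2}},\bar{\mu}^{n+\frac{1}{2}} \right)$ remains strictly positive for arbitrarily large $\varDelta t$, so no time-step restriction is ever needed.
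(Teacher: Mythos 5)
Your proposal is correct and follows essentially the same route as the paper: the paper proves Theorem \ref{the-1} by solving the auxiliary equation for $R^{n+1}$, using positive semi-definiteness of $\mathcal{L}$ to get $R^{n+1}>0$, and invoking monotonicity of the logarithm, and it then proves the Crank--Nicolson theorem by exactly the verbatim transfer you describe (the paper literally states ``Similar to the proof of Theorem \ref{the-1}''). Your added remark that the argument is indifferent to the extrapolation $\phi^{*,n+\frac{1}{2}}$ and the factor $V(\xi^{n+1})$ is precisely why that one-line appeal is legitimate.
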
 

\subsection{High-order BDF$k$ schemes}%and Adam-Bashforth 
We use the ESI-SAV method combined with $k$-step BDF (BDF$k$) to extend 
the high-order unconditional energy stability scheme for the NCH equation.
Now firstly we rewrite the system \eqref{NCH} by introducing a new function 
$V\left( \xi \right)$ which is equal to $1$ at the continuous level:
\begin{equation}
    \label{NCH000}
    \left\{ \begin{array}{l}
    \begin{aligned}
	&\frac{\partial \phi}{\partial t}=-M\mathcal{L}\mu ,\\
	&\mu =\varepsilon ^2\mathcal{L}\phi +V\left( \xi \right) f\left( \phi \right) ,\\
	&\xi =\frac{R}{\exp \left( E\left( \phi \right) \right)},\\
	&\frac{dR}{dt}=R\left( -M\mathcal{L}\mu ,\mu \right) .\\
    \end{aligned}
\end{array} \right.
\end{equation}
\par
The new equivalent system also keeps the original energy dissipation law:
\begin{equation*}
    \frac{dE}{dt}=\frac{d\ln \left( R \right)}{dt}=\frac{1}{R}\frac{dR}{dt}=\left( -M\mathcal{L}\mu ,\mu \right) \le 0.
\end{equation*}
\par
Denoting $V\left( \xi \right) =\xi \left( 2-\xi \right) $, a 
second-order ESI-SAV scheme based on the BDF2 formula 
for \eqref{NCH000} reads as:\\
for $n\ge 1$,
\begin{equation}
    \label{NCH2}
    \left\{ \begin{array}{l}
    \begin{aligned}
        &\frac{3\phi ^{n+1}-4\phi ^n+\phi ^{n-1}}{2\varDelta t}=-M\mathcal{L}\mu ^{n+1},\\
        &\mu ^{n+1}=\varepsilon ^2\mathcal{L}\phi ^{n+1}+V\left( \xi ^{n+1} \right) f\left( \phi ^{*,n+1} \right),\\
        &V\left( \xi ^{n+1} \right) =\xi ^{n+1}\left( 2-\xi ^{n+1} \right),\\
        &\xi ^{n+1}=\frac{R^{n+1}}{\exp \left( E\left( \phi ^{*,n+1} \right) \right)},\\
        &\frac{R^{n+1}-R^n}{\varDelta t}=R^{n+1}\left( -M\mathcal{L}\bar{\mu}^{n+1},\bar{\mu}^{n+1} \right),\\
    \end{aligned}
\end{array} \right.
\end{equation}
with the initial conditions
\begin{align*}
    &\phi ^0=\phi _0\left( \mathbf{x} \right),\quad \forall \mathbf{x}\in \varOmega,\\
    &R^0=\exp \left( E\left( \phi ^0 \right) \right),      
\end{align*}
where $\bar{\mu}^{n+1}=\varepsilon ^2\mathcal{L}\phi ^{*,n+1}+f\left( \phi ^{*,n+1} \right),$
$\phi ^{*,n+1}$ is any explicit $O\left( \varDelta t^2 \right)$ approximation 
of $\phi \left( t^{n+1} \right)$, so it can be calculated by
$$\phi ^{*,n+1}=2\phi ^n-\phi ^{n-1},\quad n\ge 1.$$
\par
Denoting $V\left( \xi \right) =\xi \left( 3-3\xi +\xi ^2 \right)$,
a third-order ESI-SAV scheme based on the BDF3 formula for \eqref{NCH000} 
reads as:\\ 
for $n\ge 2$,
\begin{equation}
    \label{NCH3}
    \left\{ \begin{array}{l}
    \begin{aligned}
	    &\frac{11\phi ^{n+1}-18\phi ^n+9\phi ^{n-1}-2\phi ^{n-2}}{6\varDelta t}=-M\mathcal{L}\mu ^{n+1},\\
	    &\mu ^{n+1}=\varepsilon ^2\mathcal{L}\phi ^{n+1}+V\left( \xi ^{n+1} \right) f\left( \phi ^{*,n+1} \right),\\
	    &V\left( \xi ^{n+1} \right) =\xi ^{n+1}\left( 3-3\xi ^{n+1}+\left( \xi ^{n+1} \right) ^2 \right),\\
	    &\xi ^{n+1}=\frac{R^{n+1}}{\exp \left( E\left( \phi ^{*,n+1} \right) \right)},\\
	    &\frac{R^{n+1}-R^n}{\varDelta t}=R^{n+1}\left( -M\mathcal{L}\bar{\mu}^{n+1},\bar{\mu}^{n+1} \right),\\
    \end{aligned}
\end{array} \right.
\end{equation}
where $\bar{\mu}^{n+1}=\varepsilon ^2\mathcal{L}\phi ^{*,n+1}+f\left( \phi ^{*,n+1} \right),$
$\phi ^{*,n+1}$ is any explicit $O\left( \varDelta t^3 \right)$ approximation 
of $\phi \left( t^{n+1} \right)$, and can be calculated by
$$\phi ^{*,n+1}=3\phi ^n-3\phi ^{n-1}+\phi ^{n-2},\quad n\ge 2.$$
\par
Denoting $V\left( \xi \right) =\xi \left( 2-\xi \right) \left( 2-2\xi +\xi ^2 \right)$,
a fourth-order ESI-SAV scheme based on the BDF4 formula for \eqref{NCH000} 
reads as:\\
for $n\ge 3$,
\begin{equation}
    \label{NCH4}
    \left\{ \begin{array}{l}
    \begin{aligned}
	&\frac{25\phi ^{n+1}-48\phi ^n+36\phi ^{n-1}-16\phi ^{n-2}+3\phi ^{n-3}}{12\varDelta t}=-M\mathcal{L}\mu ^{n+1},\\
	&\mu ^{n+1}=\varepsilon ^2\mathcal{L}\phi ^{n+1}+V\left( \xi ^{n+1} \right) f\left( \phi ^{*,n+1} \right),\\
	&V\left( \xi ^{n+1} \right) =\xi ^{n+1}\left( 2-\xi ^{n+1} \right) \left( 2-2\xi ^{n+1}+\left( \xi ^{n+1} \right) ^2 \right),\\
	&\xi ^{n+1}=\,\,\frac{R^{n+1}}{\exp \left( E\left( \phi ^{*,n+1} \right) \right)},\\
	&\frac{R^{n+1}-R^n}{\varDelta t}=R^{n+1}\left( -M\mathcal{L}\bar{\mu}^{n+1},\bar{\mu}^{n+1} \right),\\
    \end{aligned}
\end{array} \right.
\end{equation}
where $\bar{\mu}^{n+1}=\varepsilon ^2\mathcal{L}\phi ^{*,n+1}+f\left( \phi ^{*,n+1} \right)$,
$\phi ^{*,n+1}$ is any explicit $O\left( \varDelta t^4 \right)$ approximation 
of $\phi \left( t^{n+1} \right)$, and can be calculated by
$$\phi ^{*,n+1}=4\phi ^n-6\phi ^{n-1}+4\phi ^{n-2}-\phi ^{n-3},\quad n\ge 3.$$
\par
Similar to the proof of Theorem \ref{the-1}, we can easily get the following theorem:
\begin{theorem}
    %定理3.3
    The schemes \eqref{NCH2}-\eqref{NCH4} for the NCH equation is unconditionally energy stable in the sense that
    \begin{equation*}
        R^{n+1}-R^n=R^{n+1}\varDelta t\left( -M\mathcal{L}\bar{\mu}^{n+1},\bar{\mu}^{n+1} \right) \le 0,
    \end{equation*}
    \\
    and more importantly we have 
    \begin{equation*}
        \ln R^{n+1}-\ln R^n\le 0.
    \end{equation*}
\end{theorem}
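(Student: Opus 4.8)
The plan is to observe that the entire energy-stability statement is governed by the final equation of each scheme, namely the $R$-update
\begin{equation*}
    \frac{R^{n+1}-R^n}{\varDelta t}=R^{n+1}\left( -M\mathcal{L}\bar{\mu}^{n+1},\bar{\mu}^{n+1} \right),
\end{equation*}
which is structurally identical across \eqref{NCH2}, \eqref{NCH3} and \eqref{NCH4} and mirrors the third equation of the first-order scheme \eqref{NCH1}. Crucially, neither the particular BDF$k$ stencil for $\phi$ nor the specific polynomial $V\left( \xi \right)$ enters this relation, so the proof of Theorem \ref{the-1} transfers almost verbatim; the order of accuracy and the energy dissipation are fully decoupled in the ESI-SAV framework. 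I would state this decoupling explicitly at the outset, since it is what reduces three separate claims to a single argument.

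First I would rearrange the $R$-update to solve for $R^{n+1}$ explicitly, obtaining
\begin{equation*}
    R^{n+1}=\frac{R^n}{1+\varDelta t\left( M\mathcal{L}\bar{\mu}^{n+1},\bar{\mu}^{n+1} \right)}.
\end{equation*}
Since $\mathcal{L}$ is positive semi-definite (equivalently $-\mathcal{L}$ is negative semi-definite) and $M>0$, the quadratic form $\left( M\mathcal{L}\bar{\mu}^{n+1},\bar{\mu}^{n+1} \right)\ge 0$, so the denominator is bounded below by $1$. Combined with $R^0=\exp\left( E\left( \phi ^0 \right) \right)>0$, a straightforward induction on $n$ then yields $R^{n+1}>0$ for every admissible index. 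With $R^{n+1}>0$ in hand, the first displayed inequality is immediate, because the right-hand side equals $-R^{n+1}\varDelta t\left( M\mathcal{L}\bar{\mu}^{n+1},\bar{\mu}^{n+1} \right)\le 0$.

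For the logarithmic form I would invoke the identity $E\left( \phi \right)=\ln R$ together with the strict monotonicity of the logarithm: since $0<R^{n+1}\le R^n$, one gets $\ln R^{n+1}-\ln R^n\le 0$, which is the desired discrete analogue of the continuous dissipation law established in \eqref{eqEss}. No estimate on the time step is needed at any stage, which is the precise sense in which the schemes are \emph{unconditionally} energy stable.

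The main obstacle, such as it is, is conceptual rather than technical: one must recognize that introducing the higher-order correction factors $V\left( \xi \right)$---engineered so that $V\left( 1 \right)=1$ and that $\xi^{n+1}$ approximates $1$ to the appropriate order---does not jeopardize stability precisely because these factors live only in the definition of $\mu^{n+1}$ and never appear in the $R$-update. Confirming this clean separation, and checking that $\bar{\mu}^{n+1}=\varepsilon^2\mathcal{L}\phi^{*,n+1}+f\left( \phi^{*,n+1} \right)$ is a genuinely explicit quantity computable from known data (so that the right-hand side of the $R$-update is a fixed scalar once $\phi^{*,n+1}$ is formed), is the only point requiring care before the first-order argument applies unchanged to all of \eqref{NCH2}--\eqref{NCH4}.
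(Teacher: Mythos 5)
Your proposal is correct and follows essentially the same route as the paper: the paper simply remarks that the result follows ``similar to the proof of Theorem \ref{the-1}'', whose argument is exactly yours---solve the $R$-update for $R^{n+1}$, use positive semi-definiteness of $\mathcal{L}$ to get $R^{n+1}>0$ and hence $R^{n+1}-R^n\le 0$, then conclude via monotonicity of the logarithm. Your explicit induction on positivity and your observation that neither the BDF$k$ stencil nor $V\left(\xi\right)$ enters the $R$-update are exactly the (unstated) reasons the paper's first-order proof transfers, so you have merely made the paper's implicit argument explicit.
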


\subsection{The improvement of the schemes}
In order to prevent the instability of the result caused by the rapid growth 
of the exponential function, and ensure the energy dissipation law, 
we can add a positive number $C$ large enough to 
redefine the exponential scalar auxiliary variable:
\begin{equation}
    \label{DR}
    R\left(t\right)=\exp\left(\frac{E\left(\phi\right)}{C}\right) 
    =\exp\left(\frac{\varepsilon^2}{2C}\left(\mathcal{L}\phi,\phi\right) 
    +\frac{1}{C}\int_{\varOmega}{F\left(\phi \right)d\mathbf{x}}\right).
\end{equation}
\par
Then the scheme \eqref{NCH000} can be improved to the following equivalent format:
\begin{equation*}
    \left\{ \begin{array}{l}
    \begin{aligned}
        &\frac{\partial \phi}{\partial t}=-M\mathcal{L}\mu ,\\
        &\mu =\varepsilon ^2\mathcal{L}\phi +V\left( \xi \right) f\left( \phi \right) ,\\
        &\xi =\frac{R}{\exp \left( \dfrac{E\left( \phi \right)}{C} \right)},\\
        &\frac{dR}{dt}=\frac{R}{C}\left( -M\mathcal{L}\mu ,\mu \right) .\\
    \end{aligned}
\end{array} \right.
\end{equation*}
\par
According to formula \eqref{DR}, the modified free energy can be defined as $ClnR$,
furthermore, it satisfies $ClnR=E$ at the continuous level.
The new equivalent system also keeps the original energy dissipation law:
\begin{equation*}
    \frac{dE}{dt}=\frac{d\left( C\ln R \right)}{dt}=\frac{C}{R}\frac{dR}{dt} =-M\left( \mathcal{L}\mu ,\mu \right) \le 0.
\end{equation*}

\subsection{Spatial discretization}
We use the second-order central finite difference formula to discretize the 
spatial operator. Let $N_x$,$N_y$ be positive integers and the two dimensional domain
$\varOmega =\left[ -L_x,L_x \right] \times \left[ -L_y,L_y \right] $.
For simplicity of explanation, 
we consider periodic boundary conditions for the NCH equation.
We divide the domain into rectangular meshes with mesh size 
\begin{equation*}
    h_x=\dfrac{2L_x}{N_x},h_y=\dfrac{2L_y}{N_y}.
\end{equation*}
So we define the following uniform grid:
\begin{equation*}
    \varOmega_h=\left\{ \left( x_i,y_j \right) \left| x_i=-L_x+ih_x,y_j=-L_y+jh_y,0\le i\le N_x,0\le j\le N_y \right. \right\}.
\end{equation*}
From \cite{33,34}, we can write the nonlocal operator $\mathcal{L}$ in \eqref{eqL} 
as the following equivalent form:
\begin{equation*}
    \mathcal{L}\phi =\left( J*1 \right) \phi -J*\phi,
\end{equation*}
where
\begin{align*}
    &J*1=\int_{\varOmega}{J\left(\mathbf{x}\right)d\mathbf{x}},\\
    &\left(J*\phi\right)\left(\mathbf{x}\right)
    =\int_{\varOmega}{J\left(\mathbf{x}-\mathbf{y}\right)}\phi\left(\mathbf{y}\right)d\mathbf{y}
    =\int_{\varOmega}{J\left(\mathbf{y}\right)}\phi\left(\mathbf{x}-\mathbf{y}\right)d\mathbf{y},
\end{align*}
are exactly the periodic convolutions.
\par
For any $\phi$, we can discrete $\mathcal{L}\phi$ at $\left( t^n,x_i,y_j \right),
\left( x_i,y_j \right) \in \varOmega _h$, as follows:
\begin{equation}
    \label{eqLL}
    \left( \mathcal{L}_h\phi \right) _{i,j}^{n}=\left( J*1 \right) _{i,j}\phi _{i,j}^{n}-\left( J*\phi \right) _{i,j}^{n},
\end{equation}
where
\begin{equation*} 
    \begin{aligned}            
        \left( J*1 \right) _{i,j}\phi _{i,j}^{n}=& h_xh_y\left[ \sum_{m_1=1}^{N_x-1}{\sum_{m_2=1}^{N_y-1}{J\left( x_{m_1}-x_i,y_{m_2}-y_j \right)}} \right.\\ 
        &+\frac{1}{2}\sum_{m_1=1}^{N_x-1}{\left( J\left( x_{m_1}-x_i,y_0-y_j \right) +J\left( x_{m_1}-x_i,y_{N_y}-y_j \right) \right)}\\
        &+\frac{1}{2}\sum_{m_2=1}^{N_y-1}{\left( J\left( x_0-x_i,y_{m_2}-y_j \right) +J\left( x_{N_x}-x_i,y_{m_2}-y_j \right) \right)}\\
        &+\frac{1}{4}\left( J\left( x_0-x_i,y_0-y_j \right) +J\left( x_{N_x}-x_i,y_0-y_j \right) \right)\\
        &\left. +\frac{1}{4}\left( J\left( x_0-x_i,y_{N_y}-y_j \right) +J\left( x_{N_x}-x_i,y_{N_y}-y_j \right) \right) \right] \phi _{i,j}^{n},       
   \end{aligned}  
\end{equation*}
and
\begin{equation*} 
    \begin{aligned}            
        \left( J*\phi \right)_{i,j}^{n}=&h_xh_y\left[ \sum_{m_1=1}^{N_x-1}{\sum_{m_2=1}^{N_y-1}{J\left( x_{m_1}-x_i,y_{m_2}-y_j \right) \phi _{m_1,m_2}^{n}}} \right.\\ 
        &+\frac{1}{2}\sum_{m_1=1}^{N_x-1}{\left( J\left( x_{m_1}-x_i,y_0-y_j \right) \phi _{m_1,0}^{n}+J\left( x_{m_1}-x_i,y_{N_y}-y_j \right) \phi _{m_1,N_y}^{n} \right)}\\
        &+\frac{1}{2}\sum_{m_2=1}^{N_y-1}{\left( J\left( x_0-x_i,y_{m_2}-y_j \right) \phi _{0,m_2}^{n}+J\left( x_{N_x}-x_i,y_{m_2}-y_j \right) \phi _{N_x,m_2}^{n} \right)}\\
        &+\frac{1}{4}\left( J\left( x_0-x_i,y_0-y_j \right) \phi _{0,0}^{n}+J\left( x_{Nx}-x_i,y_0-y_j \right) \phi _{N_x,0}^{n} \right)\\
        &\left. +\frac{1}{4}\left( J\left( x_0-x_i,y_{N_y}-y_j \right) \phi _{0,N_y}^{n}+J\left( x_{N_x}-x_i,y_{N_y}-y_j \right) \phi _{N_x,N_y}^{n} \right) \right].
   \end{aligned}  
\end{equation*}
\par
Combining the fourth-order system \eqref{NCH4}, 
and the discrete nonlocal operator expression \eqref{eqLL},
we obtain the corresponding fully discrete finite difference scheme 
as follows:
\begin{equation} 
    \label{A_4}
    \left\{ \begin{array}{l}
    \begin{aligned}            
	    &\frac{25\phi _{i,j}^{n+1}-48\phi _{i,j}^{n}+36\phi _{i,j}^{n-1}-16\phi _{i,j}^{n-2}+3\phi _{i,j}^{n-3}}{\varDelta t}=-M\left(\mathcal{L}_h\mu \right) _{i,j}^{n+1},\\
	    &\mu _{i,j}^{n+1}=\varepsilon ^2\left( \mathcal{L}_h\phi \right) _{i,j}^{n+1}+V\left( \xi ^{n+1} \right) f\left( \phi _{i,j}^{*,n+1} \right),\\
	    &V\left( \xi ^{n+1} \right) =\xi ^{n+1}\left( 2-\xi ^{n+1} \right) \left( 2-2\xi ^{n+1}+\left( \xi ^{n+1} \right) ^2 \right),\\
	    &\xi ^{n+1}=\ \frac{R^{n+1}}{\exp \left( E(\phi ^{*,n+1} )\right)},\\
	    %&\frac{R^{n+1}-R^n}{\varDelta t}=R^{n+1}\left( M\mathcal{G}\bar{\mu}^{n+1},\bar{\mu}^{n+1} \right).\\
        &\frac{R^{n+1}-R^n}{\varDelta t}=R^{n+1}\sum_{i=0}^{N_x}{\sum_{j=0}^{N_y}{M\left( \mathcal{G}_h\bar{\mu} \right) _{i,j}^{n+1}\bar{\mu}_{i,j}^{n+1}}},
    \end{aligned}  
\end{array} \right.
\end{equation}
where
\begin{align*}
    &\bar{\mu}_{i,j}^{n+1}=\varepsilon ^2\left( \mathcal{L}_h\phi \right) _{i,j}^{*,n+1}+f\left( \phi _{i,j}^{*,n+1} \right) ,\quad n\ge 3,0\le i\le N_x,0\le j\le N_y,\\
    &\phi _{i,j}^{*,n+1}=4\phi _{i,j}^{n}-6\phi _{i,j}^{n-1}+4\phi _{i,j}^{n-1}-\phi _{i,j}^{n-3},\quad n\ge 3,0\le i\le N_x,0\le j\le N_y. 
\end{align*}    
\par
In order to monitor the change of free energy during evolution, 
the original discrete energy functional is defined as:
\begin{equation}
    \label{EEE}
    E^{n}=\sum_{i=0}^{N_x}{\sum_{j=0}^{N_y}{F\left( \phi _{i,j}^{n} \right) +}}\frac{\varepsilon ^2}{2}\sum_{i=0}^{N_x}{\sum_{j=0}^{N_y}{ \left( \mathcal{L}_h\phi \right) _{i,j}^{n}\cdot \phi _{i,j}^{n} }}.
\end{equation} 
In addition, combined with the $R^n$ of scheme \eqref{A_4}, 
the modified discrete energy functional can be defined as:
\begin{equation}
    \label{EE}
    \bar{E}^n=C\ln \left( R^n \right).
\end{equation} 
\par
We can see that the nonlocal diffusion term will lead the stiffness maxtrix to be 
almost full matrix which requires huge computational work and large memory. 
In this case, the fast solution method for solving the derived linear system will 
become very important and necessary. Due to the special structure of 
coefficient matrices, we adopt the similar methods as \cite{32} to ensure the efficiency 
of the algorithm based on the Fast Fourier transform (FFT) and the fast conjugate 
gradient (FCG) method. 
We set $N_x=N_y=N$, 
the overall computational cost of the FCG method is $O\left( N\log ^2N \right)$ 
with the number of iterations $O\left( \log N \right) $, 
instead of $O\left( N^3 \right) $ by using the Gaussian elimination method,
and the fast solver will reduce memory requirement from $O\left( N^2 \right) $ to $O\left( N \right)$.
\section{Numerical simulations}
In this section, various simulations are provided to verify  
the theoretical results of the developed numerical schemes. \par
This smooth kernel is given by the Gaussian function in the following form \cite{34}:
\begin{equation}
    \label{eqJ}
    \begin{aligned}
        J_{\delta}=\frac{4}{\pi ^{d/2}\delta ^{d+2}}e^{-\frac{\left| \mathbf{x} \right|^2}{\delta ^2}},
        \quad \mathbf{x}\in \varOmega \subset \mathbb{R}^d,\quad \delta >0,
    \end{aligned}
\end{equation}
and it satisfies
\begin{align*}
    &\int_{\mathbb{R}^d}{J_{\delta}\left(\mathbf{x}\right)}\,\text{d}\mathbf{x}=\frac{4}{\delta ^2},\\
    &\int_{\mathbb{R}^d}{J_{\delta}\left(\mathbf{x}\right)}\left|\mathbf{x}\right|\,\text{d}\mathbf{x}=2d.
\end{align*}
\par
From \cite{34}, this kernel $J_\delta $ has the proposition:
for any $\phi \in C^{\infty}\left( \varOmega \right)$ , 
$\mathbf{x}\in \varOmega,\, \mathcal{L}_{\delta}\phi \left( \mathbf{x} \right) 
\rightarrow -\varDelta \phi \left( \mathbf{x} \right)$, 
as $\delta \rightarrow 0$.
It tells us that the NCH equation with Gaussian kernel converges to the LCH equation 
when $\delta$ approaches zero. Using the nonlocal kernel $J_\delta $ with $\delta =\sqrt{0.1},d=2$, 
we show its profile projection in 2D by figure \ref{f:1}.
\begin{figure}[h]
    \centering
    \includegraphics[scale=0.6]{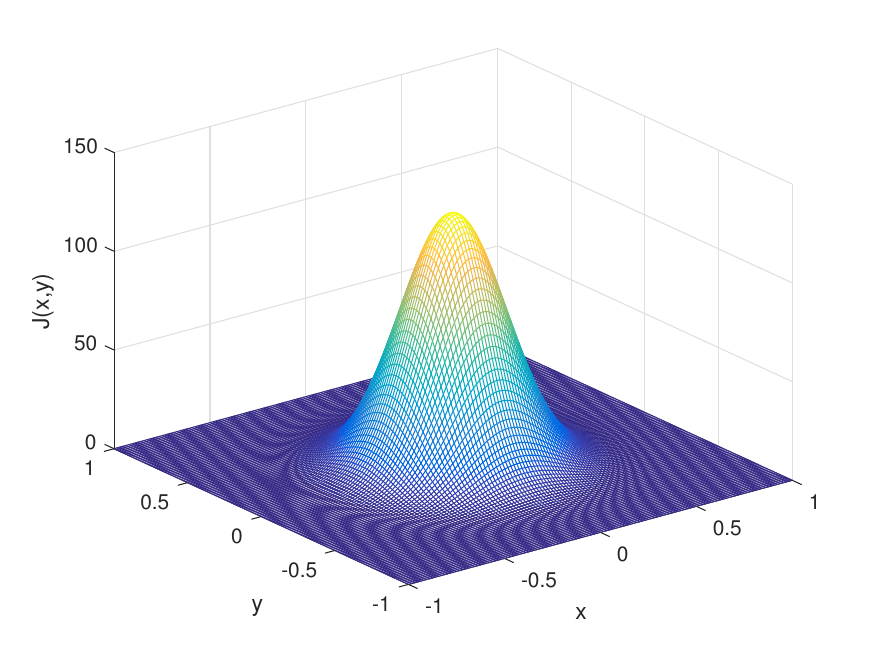}
    \caption{The profile of nonlocal kernel $J_\delta $ defined in \eqref{eqJ}}
    \label{f:1}
\end{figure}
\par
In the following numerical examples, 
we choose some numerical simulations in two dimension to verify the 
unconditional energy stability and temporal convergence rate.
We specify the rectangular domain as
$\varOmega =\left[ -1,1 \right] \times \left[ -1,1 \right]$, and 
the finite difference scheme is used in space, where $N=100$. 
Note that we denote the first-order ESI-SAV scheme \eqref{NCH1} by BDF1, 
the second-order ESI-SAV scheme based on the Crank-Nicolson 
formula \eqref{NCHCN} by CN, 
the high-order ESI-SAV schemes based on BDFk formula 
\eqref{NCH2}, \eqref{NCH3}, \eqref{NCH4}
by BDFk (k=2,3,4).
In order to verify the effectiveness of the fast solver (i.e. using FTT and 
FCG algorithm ), we compare them with the CPU time for the direct 
solver (i.e. solving with $\bar{\phi} = \bar{A}^{-1}\bar{b}$),
and abbreviate the fast solver and the direct solver as FS and DS respectively.

In our practical calculation, we will specify the operator 
$\mathcal{L}$, the bulk potential $F\left( \phi \right) $ as $\bar{\mathcal{L}}$, 
$\bar{F}( \phi )$ respectively:%, they are as followed:
\begin{align*}
    &\bar{\mathcal{L}}=\mathcal{L}+\frac{\beta}{\varepsilon ^2},\\
    &\bar{F}\left( \phi \right) =\frac{1}{4}\left( \phi ^2-1-\beta \right) ^2,
\end{align*}
where $\beta>0$ is a suitable parameter to ensure sufficient dissipation of the 
implicit part of the numerical scheme \cite{34}.
In the following numerical examples, when the error of the absolute 
value of the energy at two consecutive moments is less than the 
tolerance value of $1e-8$, we consider that the phase transition 
process has reached a steady state.\par

\textbf{Example 1}
For the first numerical example, we consider the NCH equation equipped 
with the Gaussian kernel $J_{\delta}$ and the required parameters are 
$M=1, \varepsilon^2$ =0.1, $\delta$=$\varepsilon$, $T$=0.05.  
In order to perform the refinement test of the time step, we use a linear 
refinement path in time:
$\varDelta t=T/T_n,T_n=2^n,n=3,4,\cdots ,10$, 
$\varDelta t$ is the time step.
One thing to notice is that we have no exact solution to compare, %our calculation results in the current situation, 
then we choose the difference between the results on continuous coarse 
grid and fine grid as the error calculation. 
That is, by taking the linear refinement path, 
we calculate the convergence rate by Cauchy error, 
e.g. $\lVert \phi ^{n+1}-\phi ^n \rVert _2$. 
Now we give the following initial condition:
\begin{align*}
    \phi _0\left( x,y \right) =0.8\sin \left( \pi x \right) \sin \left( \pi y \right).
\end{align*}
\par
The comparison between the fast solver (FS) and the direct solver (DS) is shown in
Table \ref{t:31} and Table \ref{t:32}, where the time is in seconds. 
It can be seen that the fast solver reduces the CPU time 
and greatly improves the computational efficiency.
Table \ref{t:3} shows the $L^2$-norm errors and 
temporal convergence rates of the schemes 
proposed in the previous section. 
As the time step decreases, the convergence rate gets closer to 
the optimal order of convergence.\par
\begin{table}[p]
    \centering    
    \renewcommand\arraystretch{1.5}  
    \caption{The CUP time(s) for direct solver (DS) and fast solver (FS) which are used for \textbf{Example 1} by the first-order BDF1 scheme}
    \label{t:31}
    \begin{tabular}{llllllll}
        \hline        
        $T_n$ & $2^3$ & $2^4$ & $2^5$ & $2^6$ & $2^7$ & $2^8$ & $2^9$\\ 
        \hline
        DS & 212.69 & 213.19 & 397.17 & 756.74 & 1430.69 & 2794.43 & 5547.35\\
        FS & 3.99 & 6.10 & 6.39 & 10.68 & 18.40 & 32.22 & 56.31\\ 
        \hline
    \end{tabular}  
\end{table}
\begin{table}[htp]
    \centering    
    \renewcommand\arraystretch{1.5}  
    \caption{The CUP time(s) for direct solver (DS) and fast solver(FS) 
    which are used for \textbf{Example 1} by the second-order CN
    scheme}
    \label{t:32}
    \begin{tabular}{llllllll}
        \hline        
        $T_n$ & $2^3$ & $2^4$ & $2^5$ & $2^6$ & $2^7$ & $2^8$ & $2^9$\\ 
        \hline
        DS & 132.44 & 230.01 & 421.47 & 779.01 & 1519.85 & 3001.38 & 5954.90\\
        FS & 2.88 & 4.05 & 6.58 & 11.13 & 18.55 & 32.10 & 57.36  \\ 
        \hline
    \end{tabular}  
\end{table}
\par
\begin{table}[hp]
    \centering    
    \renewcommand\arraystretch{1.5}  
    \caption{the $L^2$ errors and temporal convergence rates of the five schemes for the \textbf{Example 1}}
    \label{t:3}
    \begin{tabular}{llllllll}
        \hline        
        \multicolumn{2}{c}{Tn} & $2^3$ & $2^4$ & $2^5$ & $2^6$ & $2^7$ & $2^8$\\ \hline
        \multirow{2}{*}{BDF1}
        & Error & 1.16e-2 & 6.35-3 & 3.32e-3 & 1.70e-3 & 8.60e-4 & 4.32e-4\\
        & Rate & --- & 0.8686 & 0.9341 & 0.9672 & 0.9836 & 0.9918 \\ 
        \hline
        \multirow{2}{*}{CN}
        & Error & 1.79e-3 & 4.69e-4 & 1.21e-4 & 3.07e-5 & 7.76e-6 & 1.95e-6 \\
        & Rate & --- & 1.9326 & 1.9551 & 1.9748 & 1.9868 & 1.9932 \\ 
        \hline
        \multirow{2}{*}{BDF2}
        %& Error & 3.84e-2 & 1.27e-2 & 3.54e-3 & 9.27e-4 & 2.37e-4 & 5.97e-5 \\
        %& Rate & --- & 1.5961 & 1.8424 & 1.9338 & 1.9700 & 1.9857\\
        & Error & 1.85e-2 & 5.15e-3 & 1.35e-3 & 3.48e-4 & 8.80e-5 & 2.22e-5 \\
        & Rate & --- & 1.8453 & 1.9257 & 1.9626 & 1.9811 & 1.9906\\ 
        \hline
        \multirow{2}{*}{BDF3}
        & Error & 7.69e-3 & 1.08e-3 & 1.45e-4 & 1.88e-5 & 2.40e-6 & 3.03e-7 \\
        & Rate & --- & 2.8275 & 2.9030 & 2.9451 & 2.9704 & 2.9846 \\ 
        \hline
        \multirow{2}{*}{BDF4}
        & Error & 9.40e-4 & 7.71e-5 & 5.47e-6 & 3.62e-7 & 2.33e-8 & 1.47e-9 \\
        & Rate & --- & 3.6075 & 3.8174 & 3.9167 & 3.9608 & 3.9810 \\ 
        \hline
    \end{tabular}  
\end{table}\par
\textbf{Example 2}
For the second numerical example, we choose the benchmark problem \cite{32}
governed by the NCH equation equipped with the Gaussian kernel $J_{\delta}$,
and the initial condition is 
\begin{align*}
    \phi _0\left(x,y\right)=\sum_{i=1}^2{-\tan\text{h}\left(\frac{\sqrt{
        \left(x-a_i\right)^2+\left(y-b_i\right)^2}-R_0}{\sqrt{2}\varepsilon}\right)}.
\end{align*}
\par
In addition, we need to know some parameters, the radius $R_0 =0.36$,
the two bubble centers $\left( a_1,b_1 \right)=(0.4,0)$ and $\left(a_2,b_2\right)=(-0.4,0)$.
We adopt the third order ESI-SAV BDF3 scheme for this example.
The required parameters are $\varepsilon =0.02, \delta =\varepsilon$, $\varDelta t=1e-3$.
\par
The evolution of the phase field variable $\phi$ at $t=0,0.1,0.6,1,5,10$ are 
shown in Figure \ref{figure2.1}. 
From the snapshots, we observe that the two bubbles are just close 
to each other initially. With the time evolution, the two bubbles fuse with each other, which can 
also be said that the small bubble is absorbed by the large bubble. Moreover, 
due to the mass conservation of phase field variables in the NCH model, 
the two small bubbles will turn into a large round bubble.
%the two small bubbles do not disappear.
After $t=11$, they tend to a stable phase state, that is, there is only one large bubble. 
The result is consistent with that in \cite{32}. 
In Figure \ref{figure2.2}, we draw the evolution of the energy curve, 
which verifies that the modified discrete energy does conform to the law of energy dissipation law.\par
\begin{figure}[htp]%气泡
    \centering
    \subfloat[t=0]{\includegraphics[width=0.32\textwidth,height=0.32\textwidth]{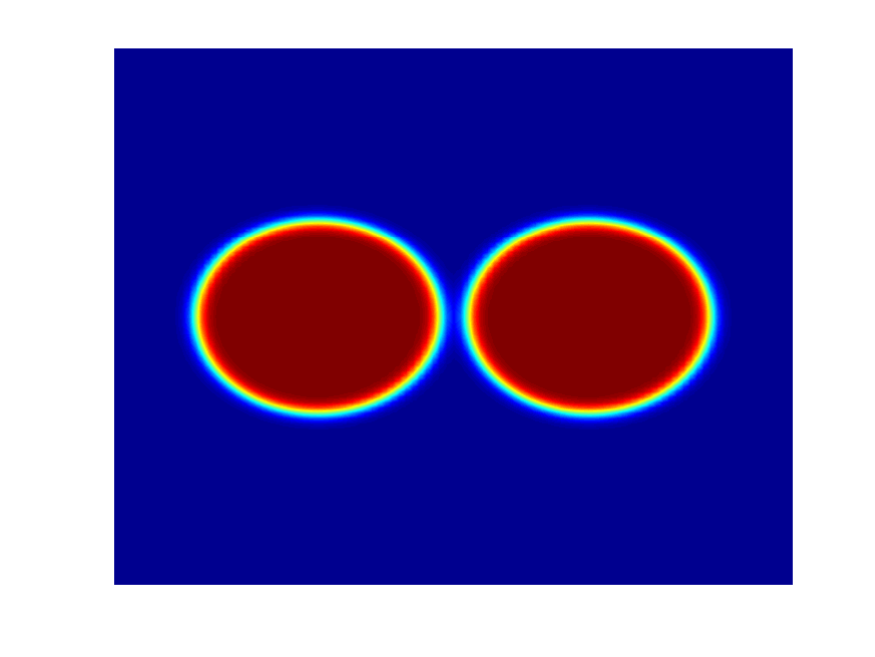}
    \label{a}}
    \hfill
    \subfloat[t=0.1]{\includegraphics[width=0.32\textwidth,height=0.32\textwidth]{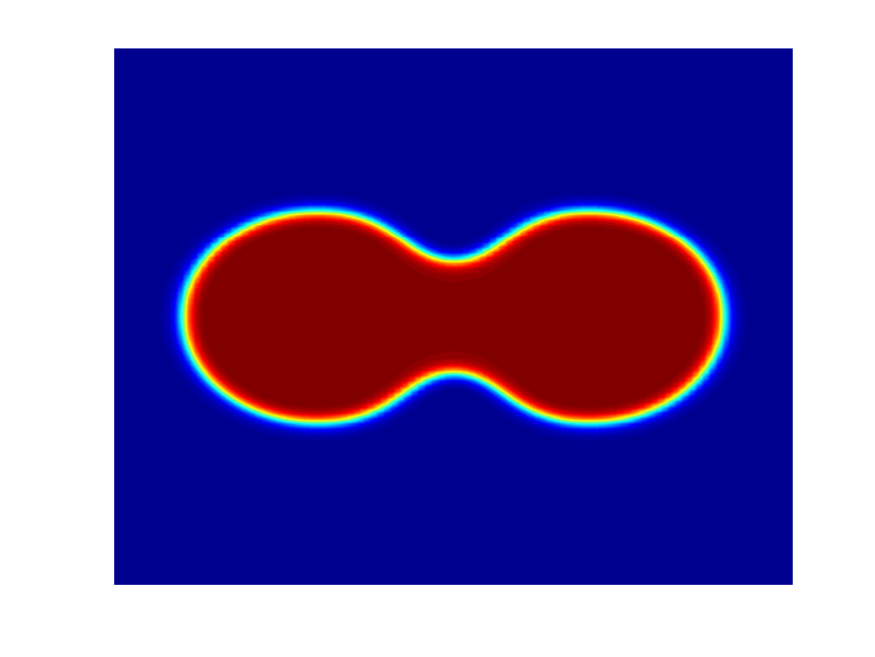}
    \label{b}}
    \hfill
    \subfloat[t=0.6]{\includegraphics[width=0.32\textwidth,height=0.32\textwidth]{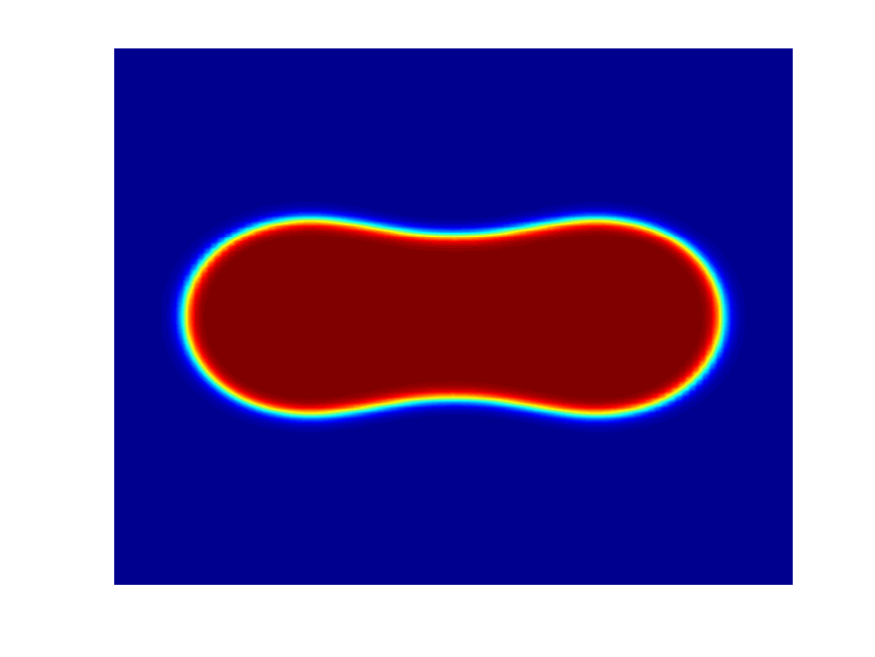}
    \label{c}}
    \hfill
    \\
    \subfloat[t=1]{\includegraphics[width=0.32\textwidth,height=0.32\textwidth]{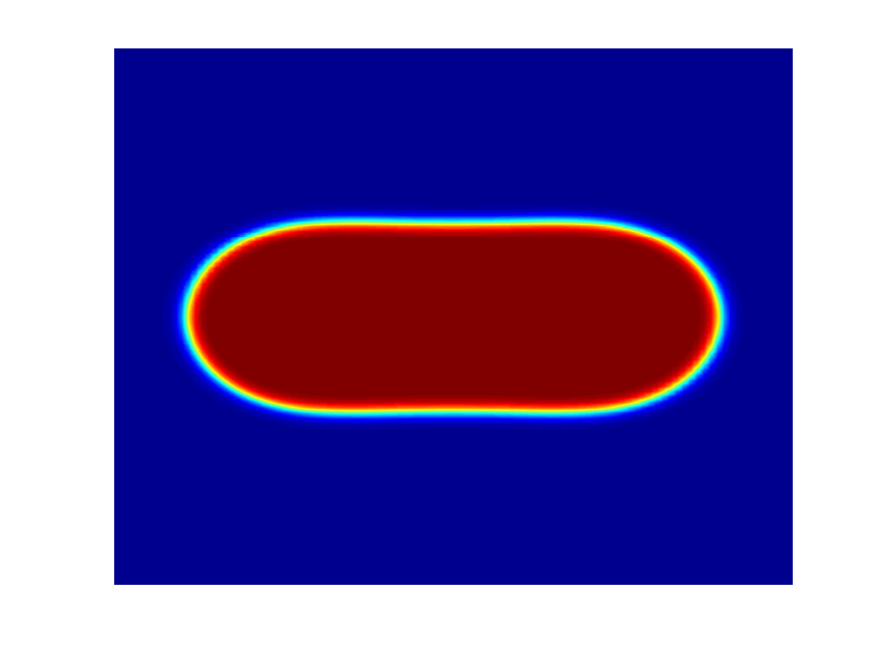}
    \label{d}}
    \hfill
    \subfloat[t=5]{\includegraphics[width=0.32\textwidth,height=0.32\textwidth]{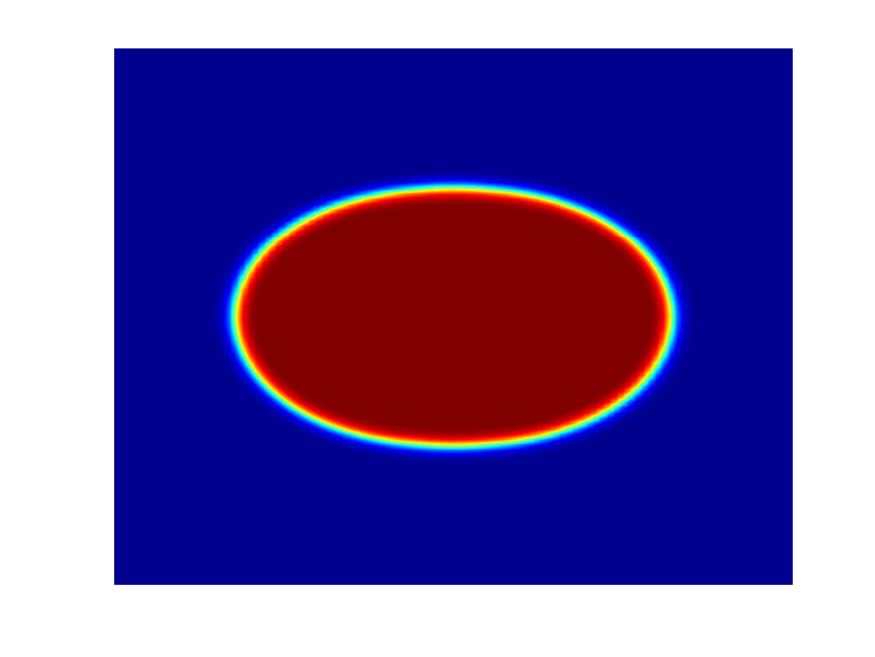}
    \label{e}}
    \hfill
    \subfloat[t=10]{\includegraphics[width=0.32\textwidth,height=0.32\textwidth]{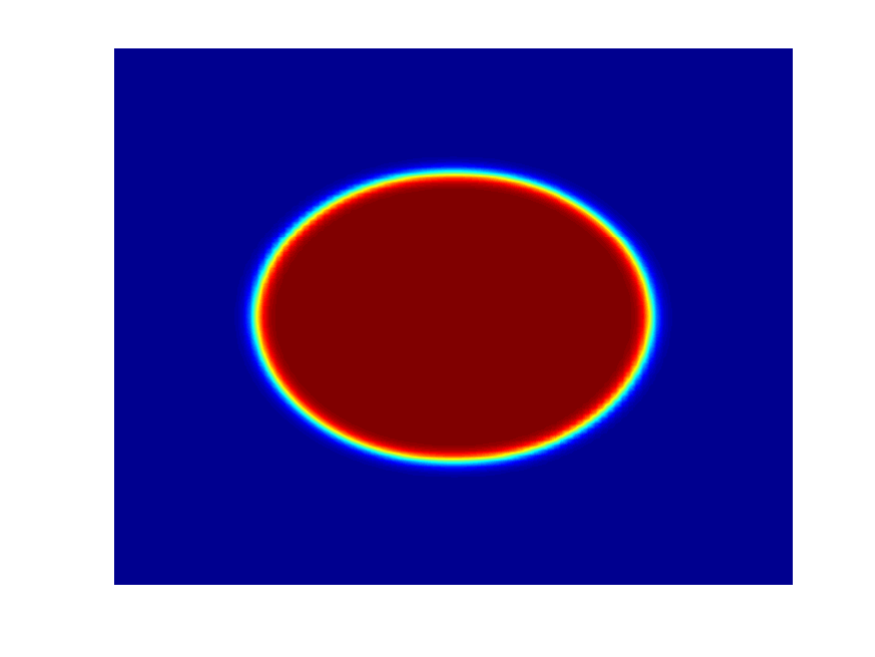}
    \label{f}}
    \hfill
    \caption{Snapshots of the phase variable $\phi$ are taken at $t=0,0.1,0.6,1,5,10$ for \textbf{Example 2}}
    \label{figure2.1}
    \hfill
\end{figure}
\begin{figure}
    \centering
    \includegraphics[scale=0.6]{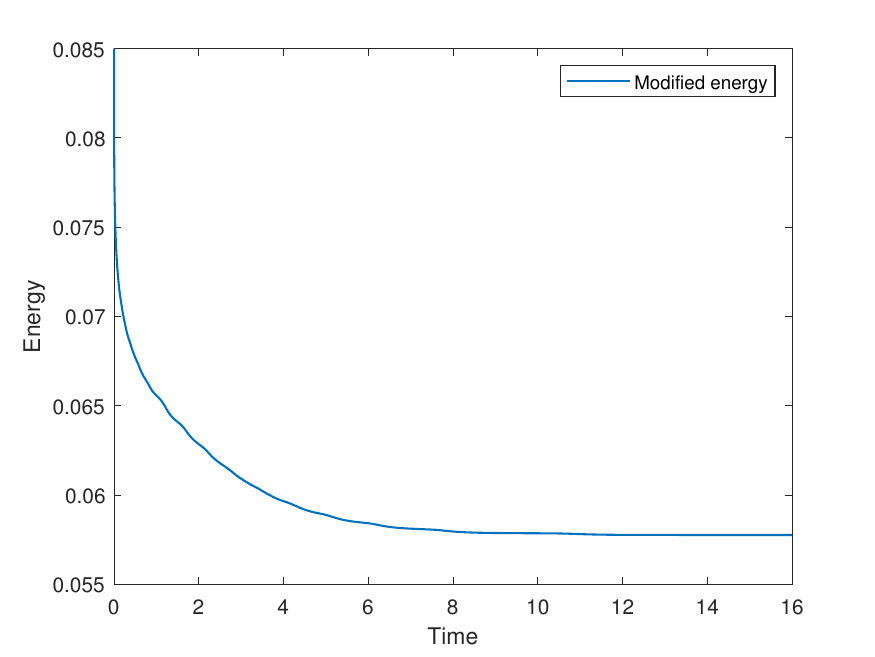}
    \caption{Time evolution of the modified discrete free energy functional \eqref{EE} for \textbf{Example 2}}
    \label{figure2.2}
\end{figure}\par
\textbf{Example 3}
In this example, we study the coarsening dynamic model driven by the NCH equation.
We still use the same 
kernel function specified in \eqref{eqJ} to simulate the long time behavior of 
the phase transition. We set the initial conditions as the randomly perturbed 
concentration field as shown below:
\begin{equation*}
    \phi _0\left( x,y \right) =\phi_a+\phi_b rand\left( x,y \right),
\end{equation*}
where $rand\left( x,y \right)$ is the random number in $[-1,1]$ with zero mean.
\par
The other required parameters are shown in Table \ref{t:43}.
\begin{table}[hp]
    \centering 
    \caption{The required parameters in \textbf{Example 3}}
    \label{t:43}   
    \begin{tabular}{cccc}
        \toprule
        $\varepsilon$ & $\delta$ & $\varDelta t$ \\%& $N$    
        \midrule
        $0.02$ & $0.02$ & $1e-3$  \\%& $100$
        \bottomrule  
    \end{tabular}  
\end{table}
\par
The long-time coarsening dynamical behaviors of the phase separation are shown 
in Figure \ref{figure3.1} and Figure \ref{figure3.2}.
The snapshots are taken at $t=0,0.2,1,8,15$, and the time of final steady state.
We adjust the values of $\phi_a$ and $\phi_b$ to obtain various coarsening dynamics model.
In Figure \ref{figure3.1}, we perform numerical simulations for initial values 
with $\phi_a=0,\phi_b=0.1$,
and we observe that the final equilibrium solution after $t=39$ shows 
a red circle.
In Figure \ref{figure3.2}, we choose $\phi_a=0.1$ and $\phi_b=0.001$, 
the final equilibrium solution after $t=33$ shows 
a blue circle, but the value of $\phi$ in the circle becomes the exact opposite of that in Figure \ref{figure3.1}.
The results is consistent with that in \cite{31,32}.
In Figure \ref{figure3.3}, we can see that the original discrete energy decreases over time, 
which satisfies the energy dissipation law.
\begin{figure}[htp]
    \centering
    \subfloat[t=0]{\includegraphics[width=0.32\textwidth,height=0.32\textwidth]{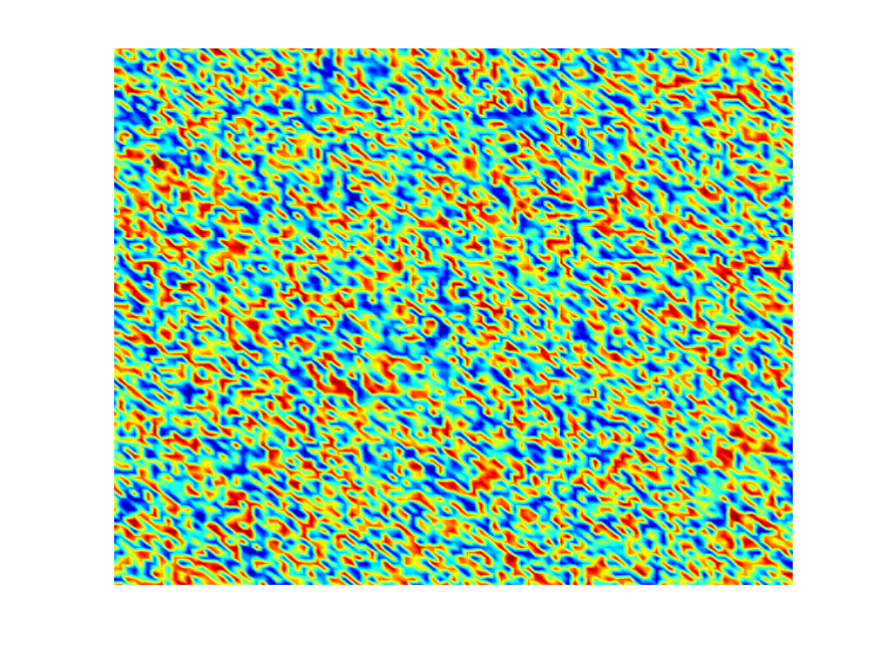}
    \label{a}}
    \hfill
    \subfloat[t=0.2]{\includegraphics[width=0.32\textwidth,height=0.32\textwidth]{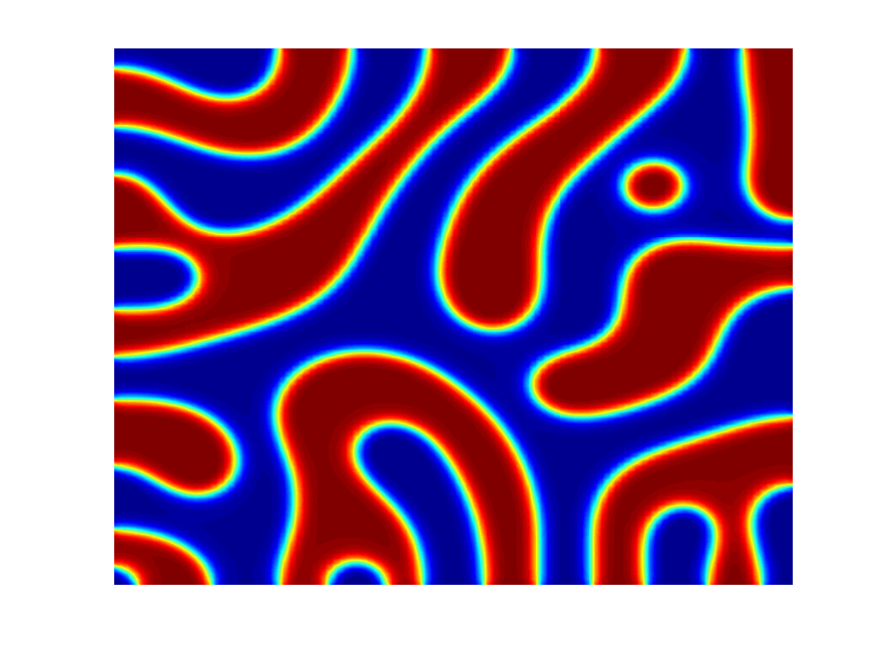}
    \label{b}}
    \hfill
    \subfloat[t=1]{\includegraphics[width=0.32\textwidth,height=0.32\textwidth]{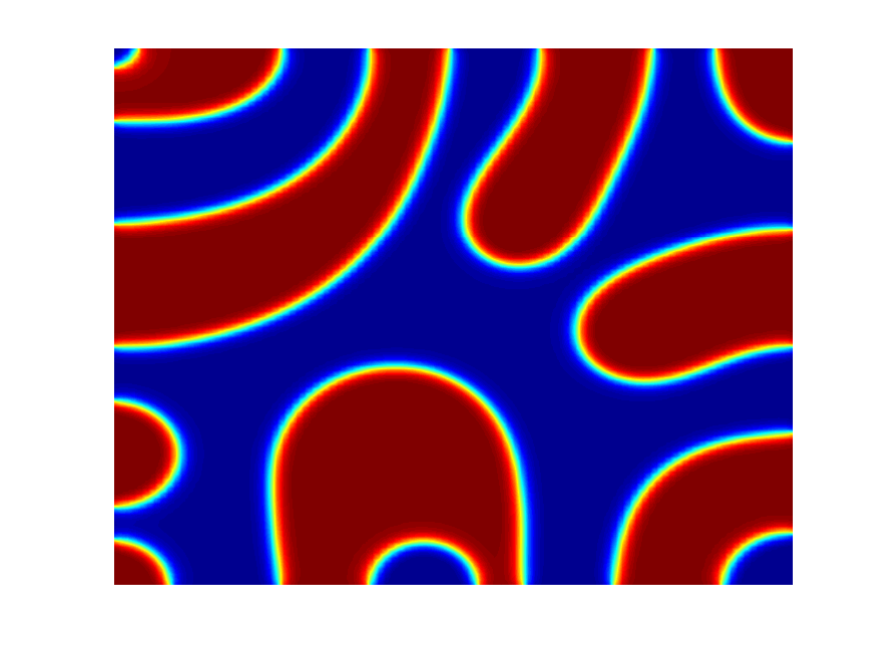}
    \label{c}}
    \hfill
    \\
    \subfloat[t=8]{\includegraphics[width=0.32\textwidth,height=0.32\textwidth]{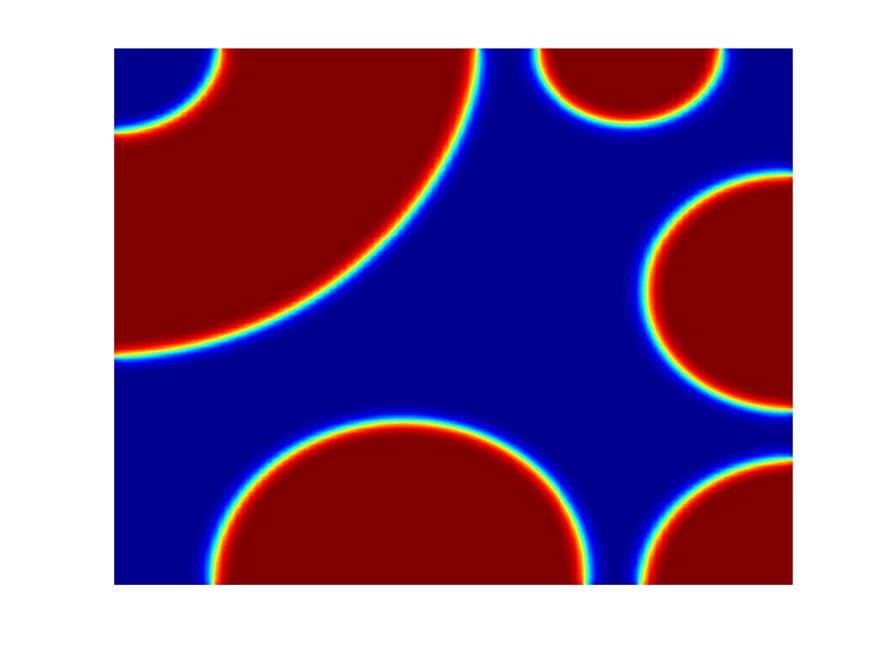}
    \label{d}}
    \hfill
    \subfloat[t=15]{\includegraphics[width=0.32\textwidth,height=0.32\textwidth]{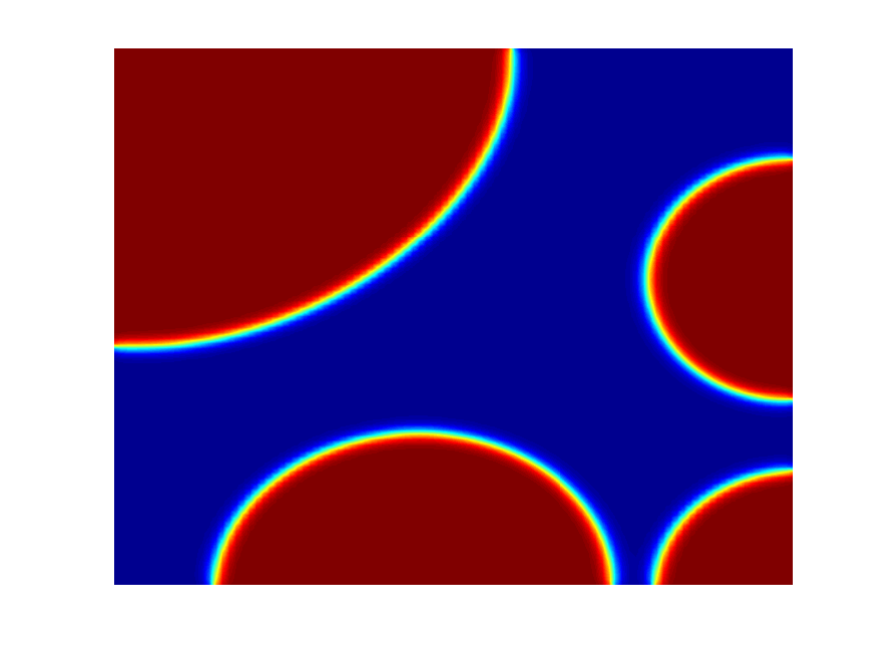}
    \label{f}}
    \hfill
    \subfloat[t=39]{\includegraphics[width=0.32\textwidth,height=0.32\textwidth]{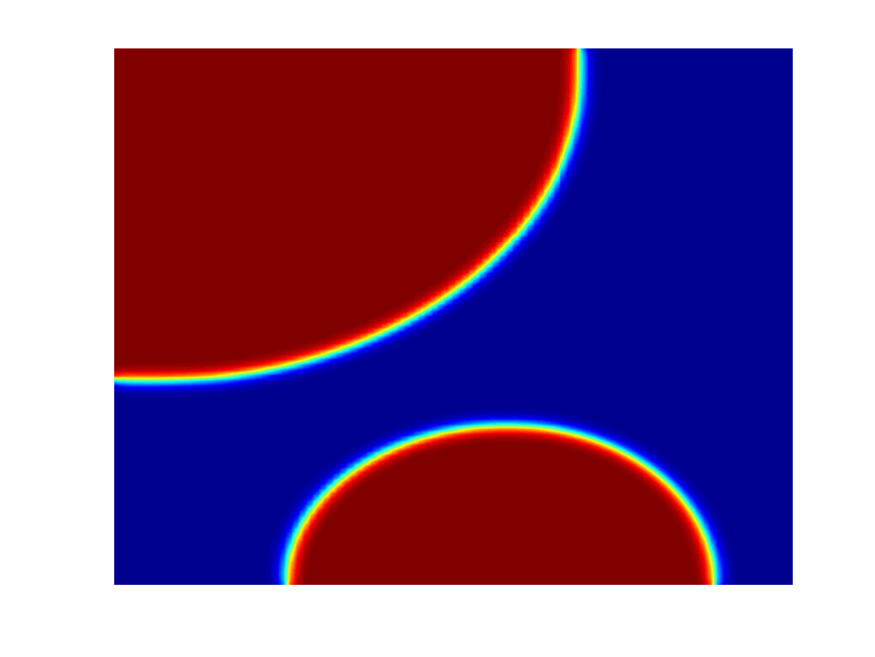}
    \label{e}}
    \hfill
    \caption{Snapshots of the phase variable $\phi$ at $t=0,0.2,1,8,15,39$ with $\phi_a=0,\phi_b=0.1$ for \textbf{Example 3}}% with $\phi _0\left( x,y,0 \right) =\phi _a+\phi _brand\left( x,y \right)$ where 
    \label{figure3.1}
    \hfill
\end{figure}
\begin{figure}[htp]
    \centering
    \subfloat[t=0]{\includegraphics[width=0.32\textwidth,height=0.32\textwidth]{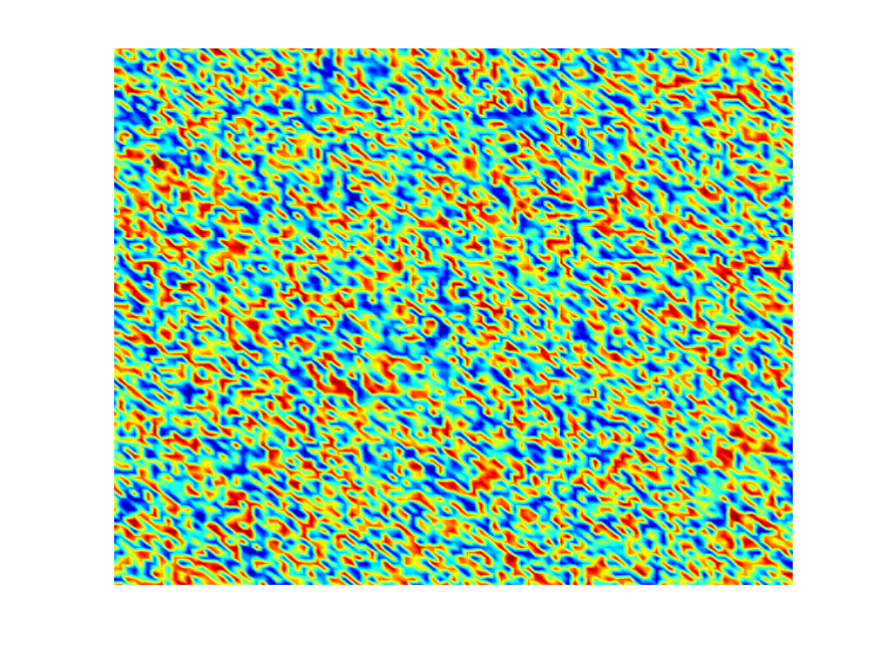}
    \label{a}}
    \hfill
    \subfloat[t=0.2]{\includegraphics[width=0.32\textwidth,height=0.32\textwidth]{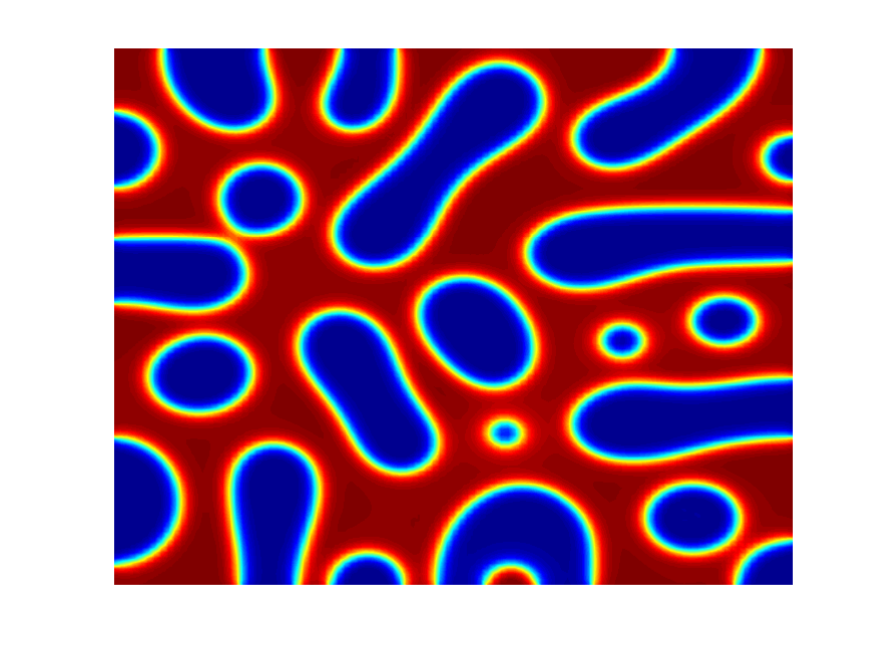}
    \label{b}}
    \hfill
    \subfloat[t=1]{\includegraphics[width=0.32\textwidth,height=0.32\textwidth]{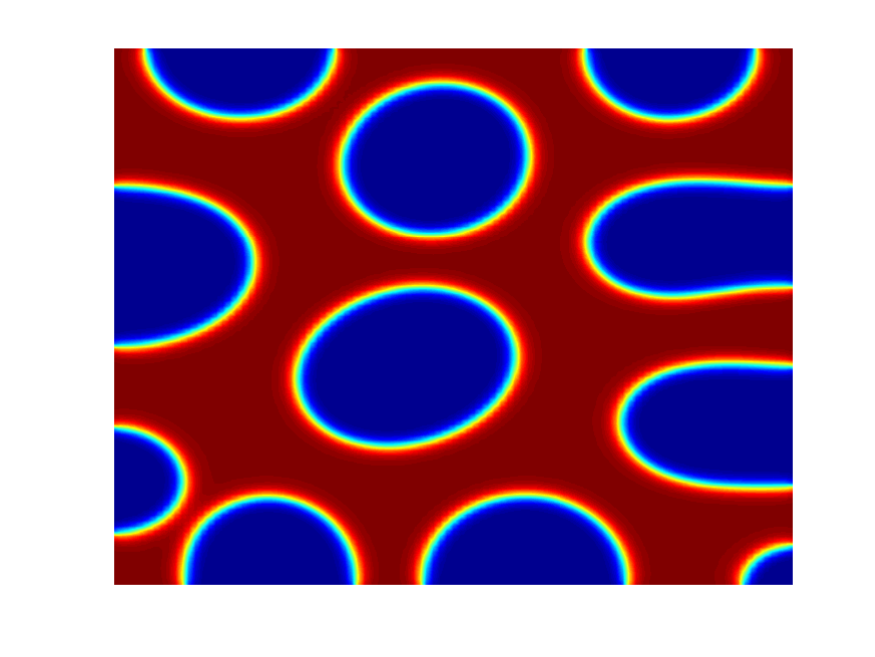}
    \label{c}}
    \hfill
    \\
    \subfloat[t=8]{\includegraphics[width=0.32\textwidth,height=0.32\textwidth]{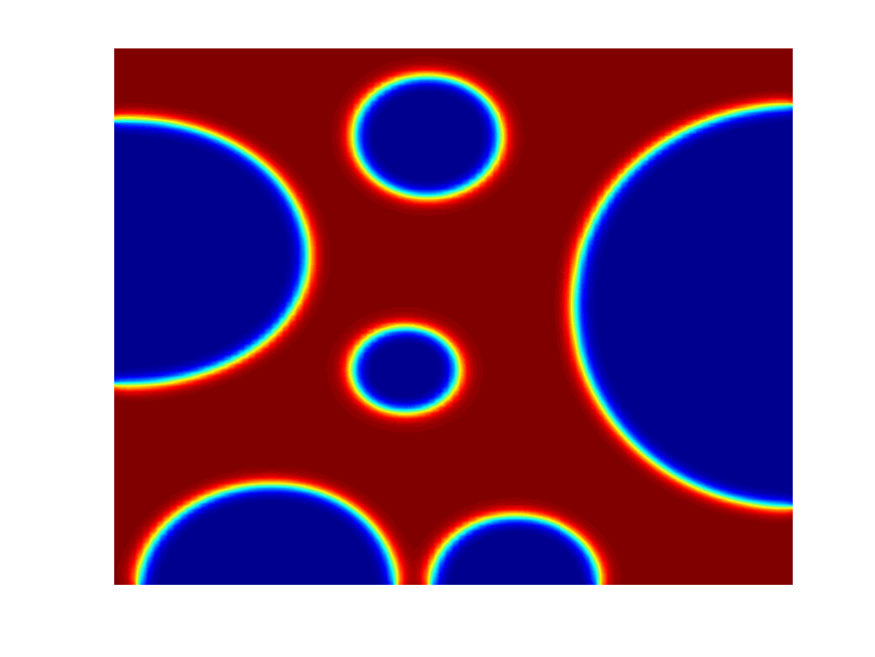}
    \label{d}}
    \hfill
    \subfloat[t=15]{\includegraphics[width=0.32\textwidth,height=0.32\textwidth]{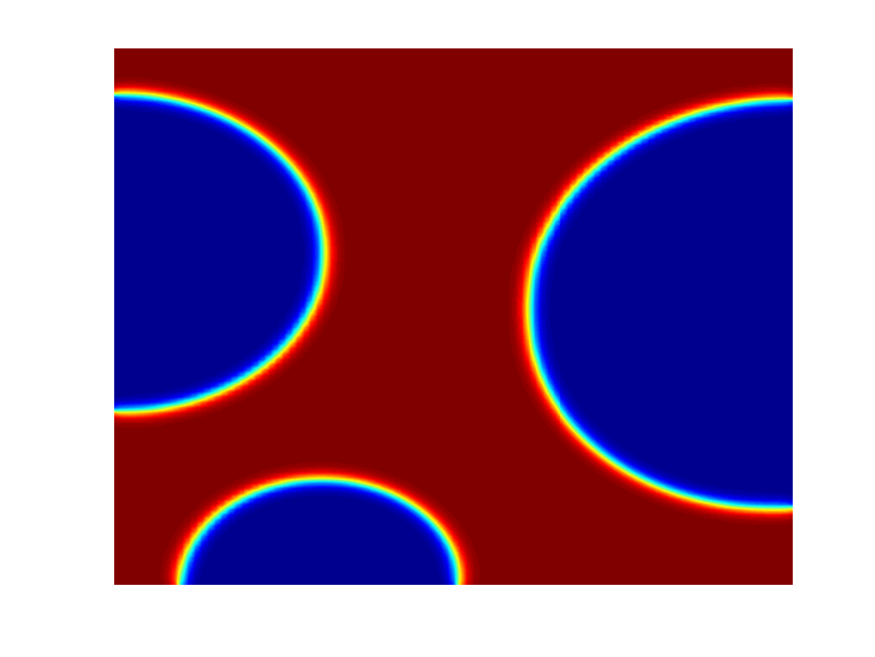}
    \label{f}}
    \hfill
    \subfloat[t=33]{\includegraphics[width=0.32\textwidth,height=0.32\textwidth]{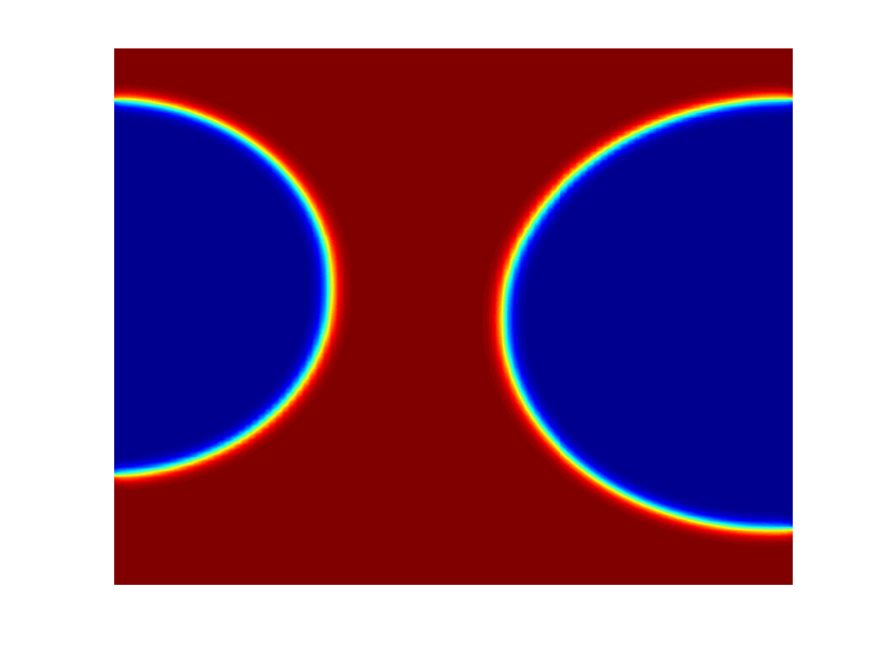}
    \label{e}}
    \hfill
    \caption{Snapshots of the phase variable $\phi$ at $t=0,0.2,1,8,15,33$ with $\phi_a=0.1,\phi_b=0.001$ for \textbf{Example 3}}
    \label{figure3.2}
    \hfill
\end{figure}
\begin{figure}[htp]
    \centering
    \subfloat[$\phi_a=0,\phi_b=0.1$]{\includegraphics[scale=0.45]{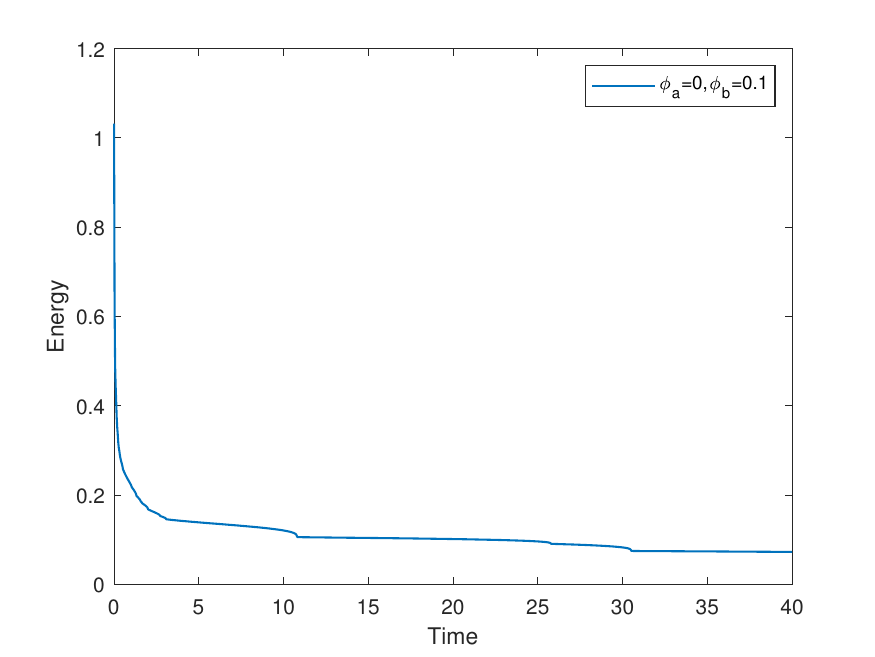}
    \label{a}}
    \hfill
    \subfloat[$\phi_a=0.1,\phi_b=0.001$]{\includegraphics[scale=0.45]{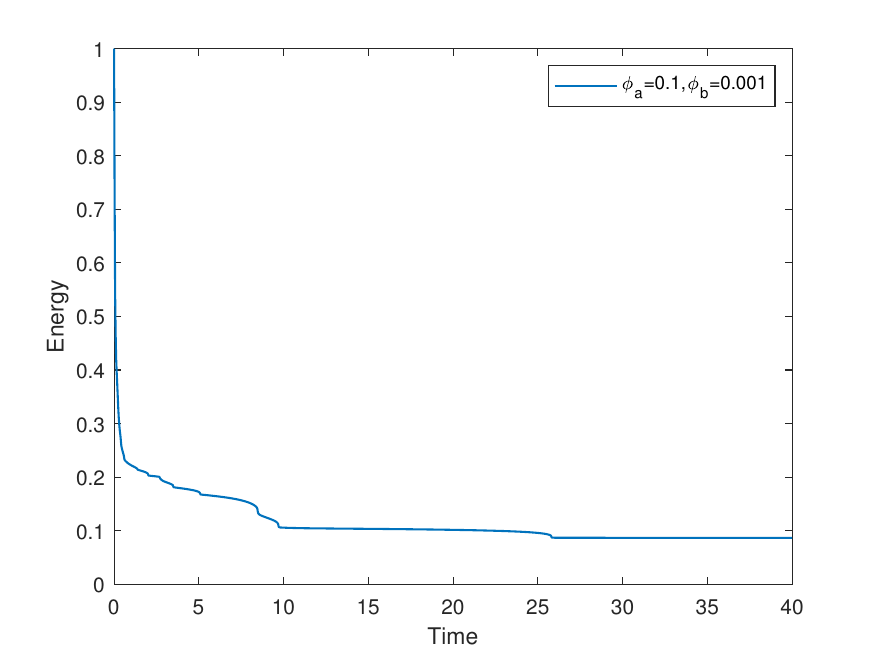}
    \label{b}}
    \hfill
    \caption{Time evolution of the original discrete free energy functional \eqref{EEE} for the coarsening dynamic model
    with %the initial data of 
    (a) $\phi_a=0, \phi_b=0.1$; (b) $\phi_a=0.1, \phi_b=0.001$.}
    \label{figure3.3}
\end{figure}
\section{Conclusion}\par
In this paper, we construct the high-order and effective linear schemes 
based on the ESI-SAV for solving the NCH equation 
with general nonlinear potential. 
Through the comparison of the LCH and NCH models, 
we analyze the nonlocal model and related properties. 
We focus on the method of numerical discretization
with high-order accuracy (in time) of 
the NCH model and use the ESI-SAV method to deal with 
nonlocal diffusion term implicitly and nonlinear term explicitly 
by introducing a scalar auxiliary variable. 
Under the assumption of positive kernel, we prove the unconditional 
energy stability of the semi-discrete scheme carefully and rigorously. 
Furthermore, the discrete energy dissipation law of the 
proposed numerical schemes is guaranteed. In addition, 
we consider integrable kernel only to make the calculation simple. 
Due to the properties of Guassian kernel parameterized by $\delta$ of 
nonlocal operator and the special format of coefficient matrix, 
we can use a fast solver based on FFT and FCG to reduce the amount 
of storage and computation cost required in fully discrete format,
which does greatly shorten the CPU time. Finally, 
we demonstrate the effectiveness, 
accuracy and unconditional energy stability of the proposed numerical scheme
by using different classical numerical experiments.
\section*{Acknowledgement}\par
This work was supported in part by the National Natural Science Foundation of China under Grants 11971272, 12001336.

\bibliographystyle{plain}
\bibliography{NCH}
\end{document}